\documentclass[11pt]{article}
\usepackage{amssymb}
\usepackage{amssymb}
\usepackage{amsfonts}
\usepackage{bm}
\usepackage{xcolor}
\usepackage{hyperref}
\hypersetup{
  colorlinks=false,
  linkbordercolor=red,linkcolor=green,pdfborderstyle={/S/U/W 1}
} 

\usepackage{tikz}
\usepackage{tikz-3dplot}
\usetikzlibrary{shapes.geometric, arrows,3d,positioning}

\usepackage{amssymb}
\usepackage[T1]{fontenc}
\usepackage{latexsym,amssymb,amsmath,amsfonts,amsthm}
\usepackage{graphics}
\usepackage{graphicx}
\usepackage{mathrsfs}
\usepackage{subfigure}
\usepackage{float}
\newcommand{\R}{{\mat R}}

\newcommand{\C}{{\mat C}}

\newcommand{\bT}{{\mathbf{ T}}}

\newcommand{\ben}{\begin{eqnarray*}}
\newcommand{\en}{\end{eqnarray}}
\newcommand{\enn}{\end{eqnarray*}}

\newcommand{\bF}{\mathbf{F}}

\renewcommand{\i}{{\rm i}}

\newcommand{\bgamma}{{\boldsymbol{\gamma}}}

\renewcommand{\d}{{\rm d}}

\newcommand{\mat}{\mathbb}

\renewcommand{\t}{{\bm t}}
\newcommand{\n}{{\bm n}}

\renewcommand{\epsilon}{\varepsilon}
\newtheorem{theorem}{Theorem}[section]
\newtheorem{lemma}[theorem]{Lemma}

\newtheorem{remark}[theorem]{Remark}
\newtheorem{example}[theorem]{Example}
\newtheorem{proposition}[theorem]{Proposition}
\newtheorem{assumption}[theorem]{Assumption}

\usepackage{color}

\begin{document}
\renewcommand{\theequation}{\arabic{section}.\arabic{equation}}

\begin{titlepage}
\title{\bf Directional Tangency and Anomalous Far-Field Decay for Schr{\"o}dinger Equations}
\author{Ruming Zhang\thanks{Institute for Mathematics, Technische Universit{\"a}t Berlin, Berlin, Germany; \texttt{ruming.zhang@tu-berlin.de}}
}
\end{titlepage}
\maketitle

\begin{abstract}
We identify a directional tangency mechanism responsible for anomalous far-field decay in Schrödinger equations. For constant-coefficient operators, we introduce a directional decomposition of the limiting resolvent based on the geometry of the Fermi surface and the observation direction. In two dimensions, we show that an analytic source produces a super-algebraically small tangency contribution, whereas a transverse non-analyticity located on the directional tangency set generates an algebraic contribution of order \(R^{-(2\alpha+1)}\). Shifting the singularity away from the tangency set restores super-algebraic decay, showing that geometric alignment is essential to the anomalous behavior. We further construct a bounded, decaying real-valued potential for which the same mechanism occurs in the first Born correction through the effective source \(Vu_0\). In a suitable long-range regime, the resulting tangency contribution decays more slowly than the conventional two-dimensional far-field term.
\end{abstract}

\noindent
\textbf{Keywords:}
Schrödinger equations; limiting absorption principle;
Fermi surfaces; directional tangency;
far-field asymptotics; anomalous decay.

\section{Introduction}

In this paper, we consider the  Schrödinger equation in a $d$-dimensional ($d\geq 2$) space:
\begin{equation}
\label{eq:schrdg}
(H-\lambda I)u=f\quad\text{ in }\R^d
\end{equation}
for a positive valued frequency $\lambda$. The operator $H$ is defined as
\begin{equation*}
Hu:=-\nabla\cdot A\nabla u(x)+V(x) u(x)
\end{equation*}
where $A$ is a real-valued, positive definite matrix, $V$ is the  real-valued decaying potential,  $f$ is the  source term and $\lambda\in\R$ is the frequency. In particular, let $H_0$ denote the operator with $V=0$. 

 Furthermore, let the resolvent
\[
R(z):=(H-zI)^{-1}.
\] 
For $z=\lambda+\i\epsilon$ with $\epsilon>0$, $R(z)$ exists. Therefore, we define $R(\lambda)$ as the limit of $R(\lambda+\i\epsilon)$ for $\epsilon\rightarrow 0^+$ according to the limiting absorption principle. Therefore, we are interested in the asymptotic behaviour of $u$ with $V$ and $f$ with particular singularities. Moreover, $R_0(z)$ and $R_0(\lambda)$ are defined similarly for $V=0$.

\subsection{Background and previous work}

\bigskip
\noindent
{\em Scattering theory and limiting absorption principle.}
Scattering theory and the limiting absorption principle (LAP) provide a
standard framework for studying time-harmonic solutions at frequencies
belonging to the continuous spectrum. We refer to
\cite{Lax1967,Melrose1992,Vainberg1989} for general introductions to
scattering theory and related problems. For time-harmonic waves, the
LAP constructs limiting solutions through the resolvent
\[
R(\lambda\pm i\varepsilon)
=(H-\lambda\mp i\varepsilon)^{-1},
\qquad \varepsilon\downarrow0.
\]
The LAP and related asymptotic properties of solutions have been studied
extensively since the classical works of Agmon and Agmon--H{\"o}rmander
\cite{Agmon1975,Agmon1976}.

\bigskip
\noindent
{\em Limiting absorption principle for Schrödinger equations with
decaying potentials.}
For time-harmonic Schrödinger equations \eqref{eq:schrdg}, the limiting
absorption principle has been studied under various assumptions on the
potential, including decay and regularity conditions. Classical
spectral theory for Schrödinger operators can be found in
\cite{Reed1978}, while uniform Sobolev and resolvent estimates were
developed in \cite{Kenig1987}. Resolvent estimates and limiting
absorption principles in Lebesgue spaces were studied in
\cite{Gold2004,Iones2006}, and effective limiting absorption principles
were further developed in \cite{Rodni2015}. For general treatments of
the limiting absorption principle for Schrödinger operators, we refer
to \cite{Yafaev1992,Yafaev2010}.

\bigskip
\noindent
For the Schr{\"o}dinger equation \eqref{eq:schrdg}, the decay rate of
the potential is closely related to the far-field asymptotics and the
appropriate radiation condition. Short-range and long-range potentials
lead to different asymptotic regimes; see
\cite{Enss1978,Enss1979,Yafaev1992,Yafaev2010}. In the long-range
setting, non-negligible phase accumulation leads to modified
asymptotic behavior and, in particular, to modified radiation
conditions; see \cite{Isozaki1980,Gatel1999}. Radiation condition
bounds and limiting absorption principles under broad assumptions were
further developed by Ito and Skibsted \cite{Ito2020} and, more recently, by Larsen \cite{Larsen2024}.

\subsection{Main contributions}

This paper aims to identify a particular mechanism underlying the
far-field behavior of solutions to the Schr{\"o}dinger equation within
the limiting absorption framework. The work is motivated by my recent
preprint \cite{Zhang2026a} on higher-dimensional periodic elliptic
equations. In that work, I treat the observation direction as an
additional parameter in the spectral analysis and thereby reveal a
geometric structure that we call the directional tangency set. The
directional tangency set is a subset of the Fermi surface at which the
observation direction is tangent to the Fermi surface. It is not merely
a transition between the real and complex parts of the Fermi surface,
or between propagating and evanescent waves; it can also determine a
distinct component of the wave field, which may become a leading-order
contribution to the total field.

This observation raises a natural question:
\emph{does the directional tangency set continue to exist and play an
important role in a non-periodic setting?}

The simplest non-periodic setting is the constant-coefficient
Schr{\"o}dinger equation, for which the Fermi surface is an ellipsoid
determined by the positive definite matrix $A$. For a fixed observation
direction $n$, the directional tangency set is the subset of this
ellipsoid at which $n$ is tangent to the Fermi surface. This leads to the first question: can a singularity in the source term interact with the directional tangency set and generate a distinct tangency -region contribution
to the far field? In particular, what happens when the singularity is
located exactly on the directional tangency set, and what happens when
it is shifted away from it?

A second question concerns the effect of a non-constant potential.
If the potential $V$ is non-constant in $\mathbb{R}^d$, while the source
term $f$ is sufficiently simple, can the potential itself generate a
similar tangency -region contribution? This question is naturally related to
the first one through the Lippmann--Schwinger representation, in which
the potential enters through the effective source term $Vu$.

In this paper, we identify and rigorously analyze this mechanism in
two dimensions. For constant coefficients, we show that a non-smooth
singularity of the source term in the direction transverse to the
observation direction can produce a slowly decaying tangency
contribution when the singularity is located precisely on the
directional tangency set. In contrast, shifting the singularity away
from the directional tangency set causes the corresponding tangency
contribution to decay super-algebraically, and the anomalous decay
disappears. This shows that the singularity alone is not sufficient:
its geometric alignment with the directional tangency set is
essential.

We further construct a non-constant decaying potential for which the
same anomalous tangency mechanism appears already at the level of the
first Born approximation. More precisely, the potential is constructed
so that its interaction with the free outgoing solution produces an
effective source term with the prescribed transverse singularity.
Thus, the same tangency mechanism can be realized in a non-periodic
Schr{\"o}dinger setting and is not restricted to the periodic framework.
The construction also shows that the tangency -region contribution can
dominate the standard non-tangential far-field term for suitable
long-range potentials.

The construction also distinguishes the existence of the tangency mechanism from its asymptotic dominance. The tangency contribution persists as long as the effective source retains a transverse non-analyticity at the tangency point, whereas its decay rate depends quantitatively on the order of this non-analyticity. For the family constructed below, the same parameter that controls the spatial decay of the potential also controls the decay of the tangency contribution. In particular, the anomalously slow, dominant regime occurs only for a suitable range of the parameter corresponding to long-range potentials. Faster spatial decay of the potential weakens the tangency contribution, although it does not by itself remove the underlying tangency mechanism.

\section{Directional decomposition and tangency geometry}

\subsection{Settings and definitions}

We first consider the unperturbed case $V=0$ for a fixed $\lambda>0$, namely
\begin{equation}
\label{eq:schrdg_homo}
    -\nabla \cdot A\nabla u(x)-\lambda u(x)=f(x)
    \qquad \text{in } \mathbb{R}^d.
\end{equation}

We consider the limiting absorption resolvent by first introducing, for
$\varepsilon>0$, the damped problem
\begin{equation}
\label{eq:schrdg_homo_damp}
    -\nabla\cdot A\nabla u_\varepsilon(x)
    -(\lambda+\i\varepsilon)u_\varepsilon(x)
    =f(x)
    \qquad \text{in } \mathbb{R}^d.
\end{equation}
The source term $f$ satisfies the following assumption.

\begin{assumption}
\label{asp1}
Assume that the Fourier transform of $f$, denoted by $\hat{f}$, belongs to $L^1(\R^d)$.
\end{assumption}

In the Fourier variables,
\begin{equation*}
    \widehat{u}_\varepsilon(\xi)
    =
    \frac{\widehat{f}(\xi)}
    {\xi\cdot A\xi-\lambda-\i\varepsilon},
\end{equation*}
and hence
\begin{equation*}
    u_\varepsilon(x)
    =
    \frac{1}{(2\pi)^d}
    \int_{\mathbb{R}^d}
    \frac{e^{\i x\cdot\xi}\widehat{f}(\xi)}
    {\xi\cdot A\xi-\lambda-\i\varepsilon}
    \,d\xi
\end{equation*}
converges absolutely, since for  $\varepsilon>0$,
$
    \left|
    \xi\cdot A\xi-\lambda-\i\varepsilon
    \right|
    \geq \varepsilon.
$

The singular set of the limiting resolvent is the Fermi surface
\begin{equation*}
    \bF
    :=
    \left\{
        \xi\in\mathbb{R}^d:
        \xi\cdot A\xi=\lambda
    \right\}.
\end{equation*}
In the following, we fix an observation direction
$\n\in \mathbb{S}^{d-1}$ and decompose the Fourier variable into its components
parallel and orthogonal to $\n$. This leads to a family of one-dimensional
integrals in the longitudinal variable, which will be analyzed by contour
deformation.

The analyticity required for this contour deformation will be imposed
linewise, for each fixed transverse parameter. No regularity with respect
to the transverse parameter is required at this stage. Additional
regularity in the transverse parameter will be introduced later when the
resulting representations are integrated and their asymptotic behavior is
analyzed.

\subsection{Directional approach}

We fix an observation direction $\n\in S^{d-1}$. For $x\in
\mathbb{R}^d\setminus\{0\}$, we write
\[
    \n=\frac{x}{\lVert x\rVert},
    \qquad
    x=R\n,
    \qquad
    R=\lVert x\rVert>0.
\]
Thus, throughout the following analysis, $\n$ is fixed while
$R\to\infty$.

The active part of the Fermi surface in the direction $\n$ is defined by
\[
    \bF_+
    :=
    \left\{
        \xi\in\bF: \n\cdot A\xi>0
    \right\},
\]
while the directional tangency set is
\[
    \bT_n
    :=
    \left\{
        \xi\in\bF: \n\cdot A\xi=0
    \right\}.
\]
The complementary part
\[
    \bF_-
    :=
    \left\{
        \xi\in\bF: \n\cdot A\xi<0
    \right\}
\]
will also be used below. For the illustrations see Figure \ref{fig:sample}.

\begin{figure}[h]
    \centering
    \includegraphics[width=0.7\textwidth]{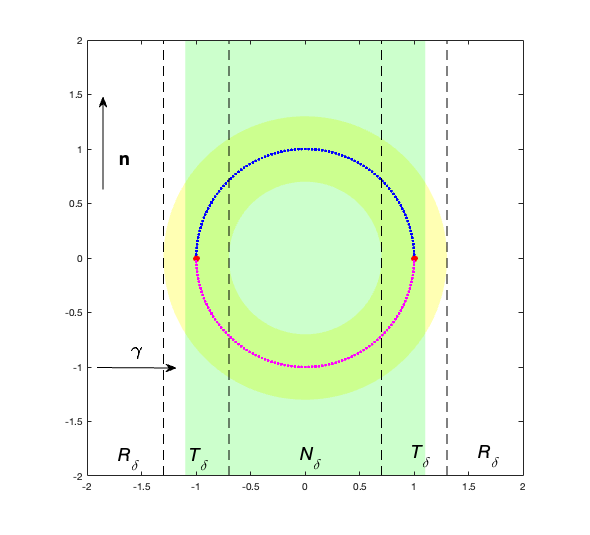}
    \caption{Structure of two-dimensional example. Blue dotted curve: $\bF_+$; pink dotted curve: $\bF_-$; red dots: $\bT_\n$; yellow annulus: $\mathcal{U}_\eta(\bF)$; green region: $\mathcal{C}_\delta$; the overlap between the yellow annulus and the green region: $\widetilde{\mathcal{U}}_\eta(\bF)$. }
    \label{fig:sample}
\end{figure}

Let
\[
    \{\t_1,\ldots,\t_{d-1}\}
\]
be an orthonormal basis of $\n^\perp$. We decompose
\[
    \xi=\bgamma \t+s\n,
    \qquad s\in\mathbb{R},
\]
where
\[
    \bgamma \t
    :=
    \gamma_1\t_1+\cdots+\gamma_{d-1}\t_{d-1},
    \qquad
    \bgamma=(\gamma_1,\ldots,\gamma_{d-1})
    \in\mathbb{R}^{d-1}.
\]
Since
$\{\t_1,\ldots,\t_{d-1},\n\}$ is an orthonormal basis of
$\mathbb{R}^d$, the change of variables is orthogonal and therefore
\[
    \d\xi=\d\bgamma\,\d s.
\]

\begin{remark}
Writing $\xi=\bgamma\t+s\n$ with $\bgamma\in\R^{d-1}$ and $s\in\mathbb{R}$,
Fubini's theorem gives
\[
    \int_{\n^\perp}
    \int_{\mathbb{R}}
    \left|\widehat f(\bgamma\t+s\n)\right|
    \,\d s\,\d\bgamma
    =
    \|\widehat f\|_{L^1(\mathbb{R}^d)}
    <\infty.
\]
Consequently,
\[
    \widehat f(\bgamma\t+\cdot\,\n)\in L^1(\mathbb{R})
    \qquad\text{for a.e. }\bgamma\in\R^{d-1}.
\]
\end{remark}

Moreover, since $x=R\n$ and $\bgamma \t\cdot \n=0$,
\[
    x\cdot(\bgamma \t+s\n)=Rs.
\]
Consequently, the Fourier representation of $u_\varepsilon$ becomes
\begin{align}
    u_\varepsilon(x)
   & =
    \frac{1}{(2\pi)^d}
    \int_{\mathbb{R}^{d-1}}
    \int_{\mathbb{R}}
    \frac{
        e^{\i Rs}
        \widehat f(\bgamma \t+s\n)
    }{
        (\bgamma \t+s\n)\cdot A(\bgamma \t+s\n)
        -\lambda-i\varepsilon
    }
    \,\d s\,\d\bgamma\nonumber\\
    & =
    \frac{1}{(2\pi)^d}
    \int_{\R^{d-1}}
    \left(
        \int_{L(\bgamma)}
        \frac{e^{ix\cdot\xi}\widehat f(\xi)}
        {\xi\cdot A\xi-\lambda-i\varepsilon}
        \,\d\ell(\xi)
    \right)
    \d\bgamma.
    \label{eq:directional-representation}
\end{align}
where 
\[
L(\bgamma):=\{\bgamma\t+s\n:\,s\in\R\}
\]
is the straight line along $\n$ with fixed $\bgamma\in\R^{d-1}$.

\subsection{Analysis of the characteristic roots}

For fixed $\bgamma\in\mathbb{R}^{d-1}$, we consider the
characteristic roots in the longitudinal variable $s$. The roots are
determined by
\begin{equation}
    (\bgamma \t+s\n)\cdot A(\bgamma \t+s\n)-\lambda-\i\varepsilon=0.
    \label{eq:characteristic-equation}
\end{equation}
For convenience, we introduce
\[
    a:=\n\cdot A\n,
    \qquad
    b(\bgamma):=\bgamma \t\cdot A\n,
    \qquad
    c(\bgamma):=\bgamma \t\cdot A\bgamma \t-\lambda.
\]
Then \eqref{eq:characteristic-equation} becomes
\begin{equation}
    a s^2+2b(\bgamma)s+c(\bgamma)-\i\varepsilon=0.
    \label{eq:quadratic-characteristic}
\end{equation}
Since $A$ is positive definite,
\[
    a=\n\cdot A\n
    =\sum_{j=1}^d a_j\n_j^2
    \geq c_0\sum_{j=1}^d \n_j^2
    =c_0>0.
\]
The characteristic roots are therefore
\begin{equation}\label{eq:characteristic-roots}
    s_\pm(\bgamma,\varepsilon)
    =
    -\frac{b(\bgamma)}{a}
    \pm
    \frac{\sqrt{D(\bgamma)+\i a\varepsilon}}{a},
    \quad\text{  where }
    D(\bgamma)
    :=
    b(\bgamma)^2-a\,c(\bgamma).
\end{equation}
We use the principal branch of the square root, with branch cut
$(-\infty,0]$. Since $a\varepsilon>0$, we have
\[
    \operatorname{Im}
    \sqrt{D(\bgamma)+\i a\varepsilon}>0.
\]
Consequently,
\begin{equation}
    \operatorname{Im}s_+(\bgamma,\varepsilon)>0,
    \qquad
    \operatorname{Im}s_-(\bgamma,\varepsilon)<0.
    \label{eq:root-half-planes}
\end{equation}
For any fixed $\bgamma\in\R^{d-1}$, $s_\pm(\bgamma,\epsilon)$ depend continuously on $\epsilon$, therefore,
\[
    s_\pm(\bgamma,\varepsilon)
    \longrightarrow
    s_\pm(\bgamma)
    :=
    -\frac{b(\bgamma)}{a}
    \pm\frac{\sqrt{D(\bgamma)}}{a},
    \qquad
    \varepsilon\to0^+.
\]

The quantity $D(\bgamma)$ determines the geometry of the real
characteristic roots. Indeed,
\begin{equation}
    \n\cdot A(\bgamma \t+s_\pm(\bgamma,\varepsilon)\n)
    =
    b(\bgamma)+a s_\pm(\bgamma,\varepsilon)
    =
    \pm\sqrt{D(\bgamma)+\i a\varepsilon}.
    \label{eq:normal-component-roots}
\end{equation}
We distinguish three cases.

\paragraph{(i) $D(\bgamma)>0$.}
In this case, $s_+(\bgamma)\neq s_-(\bgamma)$ and
\[
    \n\cdot A(\bgamma \t+s_\pm(\bgamma)\n)
    =
    \pm\sqrt{D(\bgamma)}.
\]
Hence the two limiting points satisfy
\[
\bgamma\t+s_+(\bgamma)\n\in\bF_+,\quad \bgamma\t+s_-(\bgamma)\n\in\bF_-.
\] Thus, the line $L(\bgamma)$
intersects the Fermi surface $\bF$ transversally at two distinct points.

\paragraph{(ii) $D(\bgamma)=0$.}
In this case  $s_+(\bgamma)=s_-(\bgamma):=s_g(\bgamma)$. 
The corresponding point satisfies
\[
    \bgamma\t+s_g(\bgamma)\n\in\bF,
    \qquad
    \n\cdot A(\bgamma\t+s_g(\bgamma)\n)=0.
\]
Thus the line $L(\bgamma)$ is tangent to $\bF$ at
 $\bgamma\t+s_g(\bgamma)\n$, and hence $\bgamma\t+s_g(\bgamma)\n\in\bT_\n$.

\paragraph{(iii) $D(\bgamma)<0$.}
In this case, the limiting characteristic roots $s_\pm(\bgamma)$ are non-real. Consequently, the
line $L(\bgamma)$
does not intersect the Fermi surface $\bF$.

 For a fixed $\delta>0$, we
introduce the decomposition
\begin{align}
    \mathcal R_\delta
    &:=\{\bgamma\in\mathbb R^{d-1}:D(\bgamma)<-\delta\},\\
    \mathcal T_\delta
    &:=\{\bgamma\in\mathbb R^{d-1}:|D(\bgamma)|\leq\delta\},\\
    \mathcal N_\delta
    &:=\{\bgamma\in\mathbb R^{d-1}:D(\bgamma)>\delta\},
\end{align}
so that
\[
    \mathbb R^{d-1}
    =
    \mathcal R_\delta\cup\mathcal T_\delta\cup\mathcal N_\delta.
\]
The region $\mathcal R_\delta$ is uniformly separated from $\bF$,
while $\mathcal N_\delta$ consists of transversal intersections.
The tangency neighbourhood $\mathcal T_\delta$ contains the tangency set
$\mathcal T_0$.

For a small $\eta>0$, we introduce the neighbourhood of the Fermi
surface $\bF$ as
\[
    \mathcal U_\eta(\bF)
    :=
    \left\{
        \xi\in\mathbb R^d:
        |\xi\cdot A\xi-\lambda|<\eta
    \right\}.
\]
See Figure \ref{fig:sample} for the two dimensional example. 
Recall that
\[
    (\bgamma\t+s\n)\cdot A(\bgamma\t+s\n)-\lambda
    =
    a\left(s+\frac{b(\bgamma)}{a}\right)^2
    -\frac{D(\bgamma)}{a}.
\]
Hence, for $\bgamma\in\mathcal R_\delta$,
\[
    (\bgamma\t+s\n)\cdot A(\bgamma\t+s\n)-\lambda
    \geq \frac{\delta}{a}
    \qquad\text{for all }s\in\mathbb R.
\]
Choosing $\eta<\delta/a$, we therefore have
\[
    \mathcal U_\eta(\bF)
    \subset
    \left\{
        \bgamma\t+s\n:
        \bgamma\in\mathcal T_\delta\cup\mathcal N_\delta,\,
        s\in\mathbb R
    \right\}.
\]
Thus, the regular region $\mathcal R_\delta$ does not contribute to the
neighbourhood $\mathcal U_\eta(\bF)$.

We now introduce a cylindrical truncation of the neighbourhood
$\mathcal U_\eta(\bF)$. Let
\[
    \Gamma_\delta
    :=
    \left\{
        \bgamma\in\mathbb R^{d-1}:
        D(\bgamma)=-\frac{\delta}{2}
    \right\},
\]
and define the corresponding cylinder parallel to $\n$ by
\[
    \mathcal C_\delta
    :=
    \left\{
        \bgamma\t+s\n:
        D(\bgamma)>-\frac{\delta}{2},\ s\in\mathbb R
    \right\}.
\]
For $\bgamma\in\Gamma_\delta$, we have
\[
    (\bgamma\t+s\n)\cdot A(\bgamma\t+s\n)-\lambda
    =
    a\left(s+\frac{b(\bgamma)}{a}\right)^2
    +\frac{\delta}{2a}
    \geq \frac{\delta}{2a}>0.
\]
Hence $\Gamma_\delta\cap\bF=\emptyset$, and $\Gamma_\delta$ is
uniformly separated from the Fermi surface. We define the truncated
neighbourhood by
\[
    \widetilde{\mathcal U}_\eta(\bF)
    :=
    \mathcal U_\eta(\bF)\cap\mathcal C_\delta.
\]

For each $\bgamma\in\mathcal T_\delta\cup\mathcal N_\delta$, the
straight line
\[
    L(\bgamma):=\{\bgamma\t+s\n:s\in\mathbb R\}
\]
may intersect $\widetilde{\mathcal U}_\eta(\bF)$ in two disjoint line
segments, one line segment, or not at all. By the construction of the
cylindrical truncation, the lateral boundary of the cylinder $\Gamma_\delta$ has a
positive distance from $\bF$, and hence no nonempty intersection
degenerates to a single point. We parameterize these line segments by
the variable $s$.

\begin{itemize}
\item If $L(\bgamma)\cap\widetilde{\mathcal U}_\eta(\bF)$ consists of
two disjoint line segments, we write
\[
    L(\bgamma)\cap\widetilde{\mathcal U}_\eta(\bF)
    =
    \{\bgamma\t+s\n:s\in I^-(\bgamma)\}
    \cup
    \{\bgamma\t+s\n:s\in I^+(\bgamma)\},
    \qquad
    \operatorname{ind}(\bgamma):=\{-,+\},
\]
where $I^-(\bgamma),I^+(\bgamma)\subset\mathbb R$ are disjoint open
intervals satisfying
\[
    \sup I^-(\bgamma)\leq\inf I^+(\bgamma).
\]

\item If $L(\bgamma)\cap\widetilde{\mathcal U}_\eta(\bF)$ consists of
one line segment, we write
\[
    L(\bgamma)\cap\widetilde{\mathcal U}_\eta(\bF)
    =
    \{\bgamma\t+s\n:s\in I^0(\bgamma)\},
    \qquad
    \operatorname{ind}(\bgamma):=\{0\},
\]
where $I^0(\bgamma)\subset\mathbb R$ is a nonempty open interval.

\item If
$L(\bgamma)\cap\widetilde{\mathcal U}_\eta(\bF)=\emptyset$, we set
\[
    I^0(\bgamma):=\emptyset,
    \qquad
    \operatorname{ind}(\bgamma):=\{0\}.
\]
\end{itemize}

Thus,
\[
    \widetilde{\mathcal U}_\eta(\bF)
    =
    \bigcup_{\bgamma\in\mathcal T_\delta\cup\mathcal N_\delta}
    \bigcup_{i\in\operatorname{ind}(\bgamma)}
    \{\bgamma\t+s\n:s\in I^i(\bgamma)\}.
\]

Let $\mathcal{P}\subset\mathbb{R}^{d-1}$ be a simply connected and
bounded open set such that
\[
    \bigcap_{\bgamma\in\mathcal{P}}
    \operatorname{ind}(\bgamma)
    \neq\emptyset.
\]
For any index
$
    i\in
    \bigcap_{\bgamma\in\mathcal{P}}
    \operatorname{ind}(\bgamma),
$
we define
\[
    \mathcal{U}_{\mathcal{P},i}
    :=
    \left\{
        \bgamma\t+s\n:
        \bgamma\in\mathcal{P},\,
        s\in I^i(\bgamma)
    \right\}
    \subset\mathcal{U}_\eta(\bF).
\]

\subsection{Contour deformation}
\label{sec:cont_deform}

We first state the regularity assumption on the source term that will
be used in the contour deformation.

\begin{assumption}
\label{ass:source_regularity}
Fix $\delta>0$ and choose $\eta$ such that 
  $  0<\eta<\frac{\delta}{2a}. $
Let $\widetilde{\mathcal U}_\eta(\bF)$ be the truncated neighbourhood
of the Fermi surface defined above. There exist finitely many bounded
open sets
\[
    \mathcal P_j\subset\mathcal N_\delta\cup\mathcal T_\delta,
    \qquad j=1,\ldots,J,
\]
such that $\operatorname{ind}(\bgamma)$ is constant on each
$\mathcal P_j$. We denote this common index by $i_j$ and define
\[
    \mathcal U_j
    :=
    \mathcal U_{\mathcal P_j,i_j}\quad\text{ where }\mathcal{U}_j\cap\mathcal{U}_\ell=\emptyset,\,j\neq\ell.
\]
The sets $\mathcal U_j$ cover the truncated neighbourhood in the sense
that
\[
    \overline{\widetilde{\mathcal U}_\eta(\bF)}
    =
    \bigcup_{j=1}^J\overline{\mathcal U_j}.
\]

For each $j$, there exists $\rho_j>0$ such that, for every
$\bgamma\in\mathcal P_j$, the function
\[
    s\longmapsto\widehat f(\bgamma\t+s\n)
\]
admits a holomorphic continuation to a neighbourhood of the rectangle
\[\mathcal{Q}_j(\bgamma):=
    I^{i_j}(\bgamma)+\i[0,\rho_j]=
    \left\{
        z\in\C:
        \operatorname{Re}z\in I^{i_j}(\bgamma),\
        0\leq\operatorname{Im}z\leq\rho_j
    \right\}.\]

No uniform bound with respect to $\bgamma$ is assumed, and no global
holomorphic extension of $\widehat f$ to $\mathbb C^d$ is required.
\end{assumption}

\begin{remark}
It is possible that $\mathcal{P}_j\cap\mathcal{P}_\ell\neq\emptyset$ for $j\neq \ell$. In fact, when $L(\bgamma)\cap\widetilde{\mathcal{U}}_\eta(\bF)$ contains two line segments for a fixed $\bgamma$, it belongs two different sets $\mathcal{P}_j$ and  $\mathcal{P}_\ell$.

Although the transverse sets $P_j$ may overlap, the corresponding
sets $U_j\subset \widetilde U_\eta(F)$ are assumed to be pairwise
disjoint. Thus, the decomposition of $\widetilde U_\eta(F)$ does not
involve any double counting. The overlap of the projections $P_j$
only reflects the possibility that, for a fixed transverse parameter
$\gamma$, the line $L(\gamma)$ intersects the neighborhood in two
distinct components.

\end{remark}

\begin{figure}[h]
    \centering
    \includegraphics[width=0.7\textwidth]{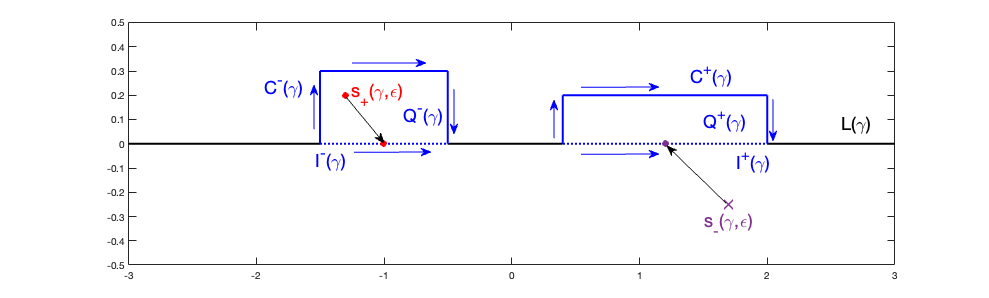}
    \caption{Deformation of $L(\bgamma)$. Red dots: $s_+(\bgamma,\epsilon)$; purple dots: $s_-(\bgamma,\epsilon)$. }
    \label{fig:cd}
\end{figure}

For $\bgamma\in\mathcal{P}_j\subset\mathcal{N}_\delta\cup\mathcal{T}_\delta$ and
$0<\epsilon\leq\epsilon_0$, the characteristic roots
$s_\pm(\bgamma,\epsilon)$ satisfy
\[
    \Im s_+(\bgamma,\epsilon)>0,
    \qquad
    \Im s_-(\bgamma,\epsilon)<0.
\]
By choosing $\epsilon_0>0$ sufficiently small, the roots remain in
the corresponding regions determined by the contour deformation.

We deform the interval $I^{i_j}(\bgamma)$ into the upper half-plane
within $\mathcal Q_j(\bgamma)$, replacing it by the other three sides
of the rectangle. Thus, the deformed contour
$\mathcal C_j(\bgamma)$ consists of the unchanged parts of the real axis
together with
\[
  \mathcal C_j(\bgamma):=
  \partial\mathcal Q_j(\bgamma)
  \setminus I^{i_j}(\bgamma),
\]
with the latter oriented according to the clockwise orientation. For the illustration we refer to Figure \ref{fig:cd}.

By construction, $s_+(\bgamma,\epsilon)$ lies inside
$\mathcal Q_j(\bgamma)$, whereas $s_-(\bgamma,\epsilon)$ remains below
the deformed contour. By continuity of $s_\pm(\bgamma,\epsilon)$ with
respect to $\epsilon$, we may choose $\epsilon_j>0$ sufficiently small
such that, for some $c_j>0$,
\[
    \operatorname{dist}
    \bigl(
        s_\pm(\bgamma,\epsilon),\mathcal C_j(\bgamma)
    \bigr)
    \geq c_j,
    \qquad
    \bgamma\in\mathcal P_j,\quad
    0<\epsilon<\epsilon_j.
\]

\begin{proposition}[Directional contour deformation]
\label{prop:directional_contour}
Let Assumption~\ref{ass:source_regularity} hold. Fix
$j\in\{1,\ldots,J\}$, $\epsilon>0$, and
$\bgamma\in\mathcal P_j$. Let $\mathcal C_j(\bgamma)$ be the
deformed path constructed above. Then
\begin{align}
\nonumber
\int_{ I^{i_j}(\bgamma)}
\frac{
    e^{\i Rs}\widehat f(\bgamma\t+s\n)
}{
    as^2+2b(\bgamma)s+c(\bgamma)-\i\epsilon
}
\,\d s
=&
\pi\i\,
\frac{
    e^{\i R s_+(\bgamma,\epsilon)}
    \widehat f(\bgamma\t+s_+(\bgamma,\epsilon)\n)
}{
    \sqrt{D(\bgamma)+\i a\epsilon}
}\\&
+
\int_{\mathcal C_j(\bgamma)}
\frac{
    e^{\i Rs}\widehat f(\bgamma\t+s\n)
}{
    as^2+2b(\bgamma)s+c(\bgamma)-\i\epsilon
}
\,\d s.\label{eq:contour_deformation}
\end{align}
\end{proposition}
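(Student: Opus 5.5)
The identity \eqref{eq:contour_deformation} should follow from the residue theorem applied on the rectangle $\mathcal Q_j(\bgamma)$. Fix $j$, $\bgamma\in\mathcal P_j$ and $\epsilon$, and assume $0<\epsilon<\epsilon_j$, which is the regime in which the deformed path was constructed; the statement implicitly relies on this. Write $I^{i_j}(\bgamma)=(\alpha,\beta)$. If the interval is empty, both sides vanish and there is nothing to prove, so assume it is nonempty.

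Set
\[
g(s):=\frac{e^{\i Rs}\widehat f(\bgamma\t+s\n)}{as^2+2b(\bgamma)s+c(\bgamma)-\i\epsilon}
=\frac{e^{\i Rs}\widehat f(\bgamma\t+s\n)}{a\,(s-s_+(\bgamma,\epsilon))\,(s-s_-(\bgamma,\epsilon))}.
\]
By Assumption~\ref{ass:source_regularity}, the numerator is holomorphic on a neighbourhood of $\mathcal Q_j(\bgamma)$, and $e^{\i Rs}$ is entire. Hence $g$ is meromorphic there, and its only possible poles are $s_\pm(\bgamma,\epsilon)$.

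The step I expect to require the most care is locating these two poles relative to the rectangle.
\begin{itemize}
\item By \eqref{eq:root-half-planes}, $\Im s_-(\bgamma,\epsilon)<0$, so $s_-$ lies outside $\mathcal Q_j(\bgamma)$.
\item $\Im s_+(\bgamma,\epsilon)>0$, so $s_+$ is not on the bottom side; it does not lie on the real axis at all, because $\epsilon>0$.
\item The construction preceding the proposition gives $s_+(\bgamma,\epsilon)$ inside $\mathcal Q_j(\bgamma)$, together with $\operatorname{dist}(s_\pm,\mathcal C_j(\bgamma))\geq c_j>0$. So $s_+$ is a simple pole in the open interior, at positive distance from the three deformed sides.
\end{itemize}
If one wants to verify the inclusion directly rather than quote the construction, the argument uses \eqref{eq:normal-component-roots} and continuity in $\epsilon$. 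Because $\bgamma\in\mathcal N_\delta\cup\mathcal T_\delta$ and $I^{i_j}(\bgamma)$ is the component of $L(\bgamma)\cap\widetilde{\mathcal U}_\eta(\bF)$ meeting the limit point $s_+(\bgamma)$, we get $\operatorname{Re}s_+(\bgamma,\epsilon)\in I^{i_j}(\bgamma)$ and $0<\Im s_+(\bgamma,\epsilon)<\rho_j$ for $\epsilon$ small.

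Next, apply the residue theorem to $\partial\mathcal Q_j(\bgamma)$ with counterclockwise orientation. This boundary consists of $[\alpha,\beta]$ traversed left to right, followed by the three remaining sides traversed from $\beta$ up, along the top leftwards, and down to $\alpha$. The latter path is exactly $\mathcal C_j(\bgamma)$ with reversed orientation: the stated clockwise orientation of $\mathcal C_j(\bgamma)$ runs from $\alpha$ to $\beta$, sharing endpoints with $I^{i_j}(\bgamma)$. Hence
\[
\int_{I^{i_j}(\bgamma)}g\,\d s-\int_{\mathcal C_j(\bgamma)}g\,\d s=2\pi\i\,\operatorname{Res}_{s=s_+}g .
\]

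Finally, compute the residue at the simple pole $s_+$:
\[
\operatorname{Res}_{s=s_+}g=\frac{e^{\i Rs_+}\widehat f(\bgamma\t+s_+\n)}{a(s_+-s_-)} .
\]
From \eqref{eq:characteristic-roots}, $a(s_+-s_-)=2\sqrt{D(\bgamma)+\i a\epsilon}$. Therefore $2\pi\i\operatorname{Res}_{s=s_+}g$ equals the first term on the right of \eqref{eq:contour_deformation}, which gives the identity.

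Two technical points remain, both routine. First, the residue theorem needs holomorphy only on a neighbourhood of the closed rectangle, which the assumption supplies, and no uniformity in $\bgamma$ is required. Second, the integrals over $I^{i_j}(\bgamma)$ and $\mathcal C_j(\bgamma)$ are finite: $g$ is continuous on these compact paths because the poles stay at positive distance from them.
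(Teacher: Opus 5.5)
Your route is the paper's: the residue theorem on $\partial\mathcal Q_j(\bgamma)$ and $a\,(s_+-s_-)=2\sqrt{D(\bgamma)+\i a\epsilon}$. Your orientation bookkeeping, the residue, and the restriction to small $\epsilon$ are all correct.

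The gap is exactly the step you flagged, and quoting the construction does not close it: the paper's proof only asserts the same inclusion $s_+(\bgamma,\epsilon)\in\mathcal Q_j(\bgamma)$. Your direct justification is that $I^{i_j}(\bgamma)$ is the component of $L(\bgamma)\cap\widetilde{\mathcal U}_\eta(\bF)$ containing $s_+(\bgamma)$. This is false whenever $i_j=-$, and such patches are forced by the covering, since every $\bgamma\in\mathcal N_\delta$ gives two segments. For these $\bgamma$ one has $D(\bgamma)\geq a\eta$ and $I^{\pm}(\bgamma)=\{s:\sqrt{D(\bgamma)-a\eta}<\pm(as+b(\bgamma))<\sqrt{D(\bgamma)+a\eta}\}$. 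Hence $s_-(\bgamma)\in I^-(\bgamma)$, while $s_+(\bgamma)-\sup I^-(\bgamma)=\bigl(\sqrt{D(\bgamma)}+\sqrt{D(\bgamma)-a\eta}\bigr)/a\geq\sqrt{D(\bgamma)}/a>0$. So for small $\epsilon$ you have $\operatorname{Re}s_+(\bgamma,\epsilon)\notin\overline{I^-(\bgamma)}$ and $\operatorname{Im}s_-(\bgamma,\epsilon)<0$. The rectangle then contains no pole, and Cauchy's theorem gives $\int_{I^-(\bgamma)}g\,\d s=\int_{\mathcal C_j(\bgamma)}g\,\d s$ with \emph{no} residue term. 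For $i_j=-$ the displayed identity is therefore false in general, not merely unproved.

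What your argument does prove is the identity for $i_j\in\{0,+\}$, together with its residue-free version for $i_j=-$. The latter is also what yields exactly one residue per line $L(\bgamma)$, as in the single $\gamma$-integral of Section~\ref{sec:tan_V_0}.

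Two smaller points in your pole-location paragraph need repair even for $i_j=0$. First, if $-a\eta<D(\bgamma)<0$, then $s_+(\bgamma)=-b(\bgamma)/a+\i\sqrt{-D(\bgamma)}/a$ is not real, so ``the component meeting $s_+(\bgamma)$'' is meaningless. The real part is the midpoint of $I^0(\bgamma)$, which is harmless. But $\operatorname{Im}s_+(\bgamma,\epsilon)\to\sqrt{-D(\bgamma)}/a$, which can come arbitrarily close to $\sqrt{\eta/a}$, while Assumption~\ref{ass:source_regularity} only provides some $\rho_j>0$. You need an extra condition such as $\rho_j>\sqrt{\eta/a}$; otherwise the pole can lie on or above the top side of the rectangle. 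Second, when $I^{i_j}(\bgamma)=\emptyset$ it is not true that both sides vanish. The two integrals do, but the residue term on the right does not, so such $\bgamma$ must be excluded from $\mathcal P_j$ rather than dismissed as trivial.
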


\begin{proof}
For fixed $\epsilon>0$, the characteristic roots
$s_+(\bgamma,\epsilon)$ and $s_-(\bgamma,\epsilon)$ are distinct.
For sufficiently small $\varepsilon>0$, the pole
$s_+(\gamma,\varepsilon)$ lies inside $\mathcal Q_j(\gamma)$, while
$s_-(\gamma,\varepsilon)$ remains outside. The residue theorem therefore gives 
\begin{align*}
\int_{ I^{i_j}(\bgamma)}
\frac{
    e^{\i Rs}\widehat f(\bgamma\t+s\n)
}{
    as^2+2b(\bgamma)s+c(\bgamma)-\i\epsilon
}
\,\d s
=&
2\pi\i\,
\operatorname{Res}\left(
\frac{
    e^{\i Rs}\widehat f(\bgamma\t+s\n)
}{
    as^2+2b(\bgamma)s+c(\bgamma)-\i\epsilon
},
s=s_+(\bgamma,\epsilon)
\right)\\
&+
\int_{ \mathcal{C}_j(\bgamma)}
\frac{
    e^{\i Rs}\widehat f(\bgamma\t+s\n)
}{
    as^2+2b(\bgamma)s+c(\bgamma)-\i\epsilon
}
\,\d s.
\end{align*}
Since
\[
as^2+2b(\bgamma)s+c(\bgamma)-\i\epsilon
=
a(s-s_+(\bgamma,\epsilon))
(s-s_-(\bgamma,\epsilon)),
\]
we have
\[
\begin{aligned}
\operatorname{Res}\left(
\frac{
    e^{\i Rs}\widehat f(\bgamma\t+s\n)
}{
    as^2+2b(\bgamma)s+c(\bgamma)-\i\epsilon
},
s=s_+(\bgamma,\epsilon)\right)
&=
\frac{
    e^{\i R s_+(\bgamma,\epsilon)}
    \widehat f(\bgamma\t+s_+(\bgamma,\epsilon)\n)
}{
    a\bigl(
        s_+(\bgamma,\epsilon)-s_-(\bgamma,\epsilon)
    \bigr)
}
\\
&=
\frac{
    e^{\i R s_+(\bgamma,\epsilon)}
    \widehat f(\bgamma\t+s_+(\bgamma,\epsilon)\n)
}{
    2\sqrt{D(\bgamma)+\i a\epsilon}
}.
\end{aligned}
\]
Substitution into the preceding identity yields
\eqref{eq:contour_deformation}.
\end{proof}

\begin{remark}
The downward deformation is also possible, yielding an analogous
representation involving $s_-(\bgamma,\varepsilon)$. We choose the upward deformation because, for $R>0$,
\[
\left|e^{iRs_+(\bgamma,\varepsilon)}\right|\leq1,
\]
whereas the factor associated with $s_-(\bgamma,\epsilon)$ grows exponentially as
$R\to+\infty$.

\end{remark}

\subsection{Directional representation of the limiting resolvent}
\label{sec:limiting_absorption}

We now establish the limiting absorption principle and derive the
decomposition of the limiting solution into regular, non-tangential, and
tangency contributions.

For $\widehat f\in L^1(\R^d)$, we first decompose
\begin{equation}
\label{eq:split_ueps}
u_\epsilon(x)
=
\frac{1}{(2\pi)^d}
\left(
\int_{\widetilde{\mathcal U}_\eta(\bF)}
+
\int_{\R^d\setminus\widetilde{\mathcal U}_\eta(\bF)}
\right)
\frac{e^{\i x\cdot\xi}\widehat f(\xi)}
{\xi\cdot A\xi-\lambda-\i\epsilon}
\,\d\xi
=:u_\epsilon^{(1)}(x)+u_\epsilon^{(2)}(x).
\end{equation}
On $\R^d\setminus\widetilde{\mathcal U}_\eta(\bF)$, the denominator is
uniformly separated from zero, so $u_\epsilon^{(2)}$ passes directly to
the limit by dominated convergence.

First consider $u_\epsilon^{(1)}$, where the denominator may
vanish. By the finite partition introduced above,
$
    \overline{\widetilde{\mathcal U}_\eta(\bF)}
    =
    \bigcup_{j=1}^J\overline{\mathcal U_j},
$
and therefore
\begin{equation}
\label{eq:split_u1eps}
u_\epsilon^{(1)}(x)
=
\sum_{j=1}^J
\frac{1}{(2\pi)^d}
\int_{\mathcal U_j}
\frac{e^{\i x\cdot\xi}\widehat f(\xi)}
{\xi\cdot A\xi-\lambda-\i\epsilon}
\,\d\xi
=:
\sum_{j=1}^J u_\epsilon^{(1,j)}(x).
\end{equation}
Using the parametrization of each patch $\mathcal U_j$, we obtain
\[
u_\epsilon^{(1,j)}(x)
=
\frac{1}{(2\pi)^d}
\int_{\mathcal P_j}
\int_{I^{i_j}(\bgamma)}
\frac{
    e^{\i Rs}
    \widehat f(\bgamma\t+s\n)
}{
    as^2+2b(\bgamma)s+c(\bgamma)-\i\epsilon
}
\,\d s\,\d\bgamma.
\]

For each fixed $\bgamma\in\mathcal P_j$, the inner integral can be
deformed according to Proposition~\ref{prop:directional_contour}.
The additional integrability required to pass to the limit
$\epsilon\to0^+$ is stated in the following assumption.

\begin{assumption}
\label{asp2}
For each $j$, there exist $\varepsilon_j>0$ and
$g_j\in L^1(\mathcal P_j)$ such that
\[
    \left|
    \frac{
        \widehat f\bigl(
            \bgamma\t+s_+(\bgamma,\varepsilon)\n
        \bigr)
    }{
        \sqrt{D(\bgamma)+\i a\varepsilon}
    }
    \right|
    \leq g_j(\bgamma),
\quad\text{ for }
    0<\varepsilon<\varepsilon_j,
    \qquad
    \bgamma\in\mathcal P_j.
\]
Moreover,
$
    \widehat f\in L^1(\mathcal S),
$
where $\mathcal S$ denotes the deformed integration domain, i.e.
\[
\mathcal{S}:=\R^d\setminus\widetilde{\mathcal{U}}_\eta(\bF)\bigcup\bigcup_{j=1}^J \left\{\bgamma\t+s\n:\,\bgamma\in\mathcal{P}_j,\,s\in \mathcal{C}_j(\bgamma)\right\}.
\]
\end{assumption}

Since $\Im s_+(\bgamma,\varepsilon)>0$ for $\varepsilon>0$ and
$R>0$, we have
\[
\left|e^{\i R s_+(\bgamma,\varepsilon)}\right|\le1.
\]
Thus Assumption~\ref{asp2} provides an $L^1$-majorant for the full
residue integrand. We can now state the limiting absorption principle.

\begin{theorem}[Directional contour representation of the limiting resolvent]
\label{thm:limiting_absorption}
Under Assumptions~\ref{ass:source_regularity} and~\ref{asp2}, for every $x=R\n$,
the limit
\[
    u(x):=\lim_{\epsilon\to0^+}u_\epsilon(x)
\]
exists and is given by
\begin{equation}
\label{eq:limiting_absorption_formula}
u(x)
=
\frac{\pi\i}{(2\pi)^d}
\sum_{j=1}^J
\int_{\mathcal P_j}
\frac{
    e^{\i R s_+(\bgamma)}
    \widehat f(
        \bgamma\t+s_+(\bgamma)\n
    )
}{
    \sqrt{D(\bgamma)}
}
\,\d\bgamma
+
\frac{1}{(2\pi)^d}
\int_{\mathcal S}
\frac{
    e^{\i x\cdot\xi}\widehat f(\xi)
}{
    \xi\cdot A\xi-\lambda
}
\,\d\xi.
\end{equation}
\end{theorem}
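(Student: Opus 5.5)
The plan is to start from the splitting \eqref{eq:split_ueps}--\eqref{eq:split_u1eps} and apply Proposition~\ref{prop:directional_contour} inside each patch integral $u^{(1,j)}_\epsilon$, then pass to the limit $\epsilon\to0^+$ term by term using dominated convergence.

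\textbf{Step 1: deform each patch.} For $0<\epsilon<\min_j\epsilon_j$ and fixed $x=R\n$, insert \eqref{eq:contour_deformation} into the $s$-integral of $u^{(1,j)}_\epsilon$. This gives
\[
u^{(1,j)}_\epsilon(x)=\frac{\pi\i}{(2\pi)^d}\int_{\mathcal P_j}\frac{e^{\i Rs_+(\bgamma,\epsilon)}\widehat f(\bgamma\t+s_+(\bgamma,\epsilon)\n)}{\sqrt{D(\bgamma)+\i a\epsilon}}\,\d\bgamma+\frac{1}{(2\pi)^d}\int_{\mathcal P_j}\int_{\mathcal C_j(\bgamma)}\frac{e^{\i Rs}\widehat f(\bgamma\t+s\n)}{as^2+2b(\bgamma)s+c(\bgamma)-\i\epsilon}\,\d s\,\d\bgamma .
\]
Summing over $j$ and adding $u^{(2)}_\epsilon$ collects the second terms together with $u^{(2)}_\epsilon$ into a single integral over $\mathcal S$. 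Here $\xi=\bgamma\t+s\n$ with complex $s$ on the deformed pieces, and $e^{\i x\cdot\xi}=e^{\i Rs}$ there. Measurability of the patch integrands in $\bgamma$ needs a brief remark: the contour integrals are limits of Riemann sums in $s$ of functions measurable in $\bgamma$.

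\textbf{Step 2: residue term.} Since $|e^{\i Rs_+(\bgamma,\epsilon)}|\le1$ and Assumption~\ref{asp2} supplies the majorant $g_j\in L^1(\mathcal P_j)$, dominated convergence applies once we have pointwise convergence for a.e. $\bgamma$. The roots satisfy $s_+(\bgamma,\epsilon)\to s_+(\bgamma)$, and $\widehat f(\bgamma\t+\cdot\,\n)$ is holomorphic near $\mathcal Q_j(\bgamma)$, hence continuous up to the real point $s_+(\bgamma)\in I^{i_j}(\bgamma)$. For $D(\bgamma)\neq0$ we also have $\sqrt{D+\i a\epsilon}\to\sqrt{D}$. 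The set $\{D=0\}$ is a level set of a nondegenerate quadratic polynomial in $\bgamma$ (in 2D, finitely many points), so it is Lebesgue-null and does not matter. This yields the first sum in \eqref{eq:limiting_absorption_formula}. The principal-branch convention must be respected when $D<0$ inside $\mathcal T_\delta$; there $s_+(\bgamma)$ is non-real but still lies in $\mathcal Q_j(\bgamma)$, and the limit is taken within the holomorphic extension.

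\textbf{Step 3: contour term.} On $\mathcal S$ the denominator $\xi\cdot A\xi-\lambda-\i\epsilon$ is bounded away from zero uniformly in $\epsilon$. Off $\widetilde{\mathcal U}_\eta(\bF)$ this holds because the region lies outside $\mathcal U_\eta(\bF)\cap\mathcal C_\delta$ (outside the cylinder, use $D<-\delta/2$). On $\mathcal C_j(\bgamma)$ it holds by the uniform distance $c_j$ to $s_\pm(\bgamma,\epsilon)$, since $|a(s-s_+)(s-s_-)|\ge a c_j^2$. On the complex pieces one also has $|e^{\i Rs}|\le1$ for $\Im s\ge0$. Then $\widehat f\in L^1(\mathcal S)$ is the majorant, and dominated convergence gives the second term. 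The mixed real/complex parametrization of $\mathcal S$ should be handled by stating that the integral over $\mathcal S$ is \emph{defined} as the iterated $\d s\,\d\bgamma$ integral.

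\textbf{Main obstacle.} The main obstacle is the uniform lower bound on the denominator near $\mathcal T_\delta$, where $s_\pm(\bgamma,\epsilon)$ nearly coincide and approach the real axis. If the distance bound from the construction is not uniform there, I would instead bound $|as^2+2bs+c-\i\epsilon|\ge a\,\mathrm{dist}(s,s_+)\,\mathrm{dist}(s,s_-)$ along the three sides of $\partial\mathcal Q_j(\bgamma)\setminus I^{i_j}(\bgamma)$. The vertical sides sit at the endpoints of $I^{i_j}(\bgamma)$, where $|\xi\cdot A\xi-\lambda|\ge\eta$, or on $\Gamma_\delta$. The horizontal side sits at height $\rho_j$, far from the roots once $\rho_j$ exceeds $\sup\Im s_+$ plus a margin.
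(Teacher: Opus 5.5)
Your proposal is essentially the paper's proof verbatim, and it is correct to the same standard as the paper. The steps match: Proposition~\ref{prop:directional_contour} is applied patchwise, the lifted contours are merged with $u^{(2)}_\epsilon$ into the integral over $\mathcal S$, and dominated convergence is used with the majorant $g_j$ for the residue term and with the uniform bound $\operatorname{dist}(s_\pm(\bgamma,\epsilon),\mathcal C_j(\bgamma))\geq c_j$ plus $\widehat f\in L^1(\mathcal S)$ for the contour term (the paper takes that bound directly from the construction preceding the Proposition, so your fallback is not needed). One caveat, which you inherit from the Proposition rather than introduce yourself: on a patch with $i_j=-$ the root $s_+(\bgamma)$ lies in $I^+(\bgamma)$, not in $I^{i_j}(\bgamma)$ as your Step~2 asserts, so the upward deformation of $I^-(\bgamma)$ encloses no pole and the residue sum in \eqref{eq:limiting_absorption_formula} should run only over patches with $i_j\in\{0,+\}$; as written, it counts the residue twice wherever $L(\bgamma)$ meets $\widetilde{\mathcal U}_\eta(\bF)$ in two segments.
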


\begin{proof}
For each $j$, Proposition~\ref{prop:directional_contour} gives
\begin{align}
u^{(1,j)}_\epsilon(x)
=&
\frac{\pi\i}{(2\pi)^d}
\int_{\mathcal P_j}
\frac{
    e^{\i R s_+(\bgamma,\epsilon)}
    \widehat f(
        \bgamma\t+s_+(\bgamma,\epsilon)\n
    )
}{
    \sqrt{D(\bgamma)+\i a\epsilon}
}
\,\d\bgamma\nonumber\\&+
\frac{1}{(2\pi)^d}\int_{\mathcal{P}_j}
\int_{\mathcal C_j(\bgamma)}
\frac{
    e^{\i x\cdot(\bgamma\t+s\n)}\widehat f(\bgamma\t+s\n)
}{
    (\bgamma\t+s\n)\cdot A(\bgamma\t+s\n)-\lambda-\i\epsilon
}
\,\d s\,\d\bgamma.
\label{eq:contour_integrated}
\end{align}

Summing over $j$ and combining the deformed contours with the
contribution from
$\R^d\setminus\widetilde{\mathcal U}_\eta(\bF)$, we obtain
\begin{equation}
\label{eq:ueps_deformed}
u_\epsilon(x)
=
\frac{\pi\i}{(2\pi)^d}
\sum_{j=1}^J
\int_{\mathcal P_j}
\frac{
    e^{\i R s_+(\bgamma,\epsilon)}
    \widehat f(
        \bgamma\t+s_+(\bgamma,\epsilon)\n
    )
}{
    \sqrt{D(\bgamma)+\i a\epsilon}
}
\,\d\bgamma
+
\frac{1}{(2\pi)^d}
\int_{\mathcal S}
\frac{
    e^{\i x\cdot\xi}\widehat f(\xi)
}{
    \xi\cdot A\xi-\lambda-\i\epsilon
}
\,\d\xi.
\end{equation}

By the definition of $s_+(\bgamma,\epsilon)$,
$
    s_+(\bgamma,\epsilon)
    \longrightarrow
    s_+(\bgamma)$
    as $\epsilon\to0^+.
$
Assumption~\ref{asp2} provides an $L^1$-majorant for the residue
integrand. Hence, by dominated convergence,
\[
\lim_{\epsilon\to0^+}
\int_{\mathcal P_j}
\frac{
    e^{\i R s_+(\bgamma,\epsilon)}
    \widehat f(
        \bgamma\t+s_+(\bgamma,\epsilon)\n
    )
}{
    \sqrt{D(\bgamma)+\i a\epsilon}
}
\,\d\bgamma
=
\int_{\mathcal P_j}
\frac{
    e^{\i R s_+(\bgamma)}
    \widehat f(
        \bgamma\t+s_+(\bgamma)\n
    )
}{
    \sqrt{D(\bgamma)}
}
\,\d\bgamma.
\]

On the deformed contours, the denominator is uniformly bounded away
from zero. Hence, together with $\widehat f\in L^1(\mathcal S)$,
dominated convergence yields
\[
\lim_{\epsilon\to0^+}
\frac{1}{(2\pi)^d}
\int_{\mathcal S}
\frac{
    e^{\i x\cdot\xi}\widehat f(\xi)
}{
    \xi\cdot A\xi-\lambda-\i\epsilon
}
\,\d\xi
=
\frac{1}{(2\pi)^d}
\int_{\mathcal S}
\frac{
    e^{\i x\cdot\xi}\widehat f(\xi)
}{
    \xi\cdot A\xi-\lambda
}
\,\d\xi.
\]
Combining the two limits proves \eqref{eq:limiting_absorption_formula}.
\end{proof}

We next decompose the limiting solution according to the geometry of
the characteristic roots. Fix $\delta>0$ sufficiently small and recall
$
\mathbb R^{d-1}
=
\mathcal R_\delta\cup\mathcal T_\delta\cup\mathcal N_\delta.
$
The deformed-contour contribution is included in the regular part,
while the residue contribution is split according to
$\mathcal N_\delta$ and $\mathcal T_\delta$. Thus
\begin{equation}
\label{eq:three_term_decomposition}
u(x)
=
u^{\mathrm{reg}}(x)
+
u^{\mathrm{nt}}(x)
+
u^{\mathrm{tan}}(x),
\end{equation}
where
\begin{align}
\label{eq:regular}
u^{\mathrm{reg}}(x)
&:=
\frac{1}{(2\pi)^d}
\int_{\mathcal S}
\frac{
    e^{\i Rs}\widehat f(\bgamma\t+s\n)
}{
    as^2+2b(\bgamma)s+c(\bgamma)
}
\,\d s\,\d\bgamma,
\\
\label{eq:non_tangency}
u^{\mathrm{nt}}(x)
&:=
\frac{\i}{(2\pi)^{d-1}}
\sum_{j=1}^J
\int_{\mathcal P_j\cap\mathcal N_\delta}
\frac{
    e^{\i R s_+(\bgamma)}
    \widehat f(
        \bgamma\t+s_+(\bgamma)\n
    )
}{
    2\sqrt{D(\bgamma)}
}
\,\d\bgamma,
\\
\label{eq:tangency}
u^{\mathrm{tan}}(x)
&:=
\frac{\i}{(2\pi)^{d-1}}
\sum_{j=1}^J
\int_{\mathcal P_j\cap\mathcal T_\delta}
\frac{
    e^{\i R s_+(\bgamma)}
    \widehat f(
        \bgamma\t+s_+(\bgamma)\n
    )
}{
    2\sqrt{D(\bgamma)}
}
\,\d\bgamma.
\end{align}

The regular contribution will be treated directly on the deformed
contour, while the non-tangential and tangency contributions will be
analyzed separately below.

\section{Tangency term with $V=0$}
\label{sec:tan_V_0}

The aim of this section is to demonstrate that the interaction between a transverse singularity of the source and the tangency set can produce anomalous far-field decay. We first consider the analytic case and show that the tangency-region contribution cancels to all algebraic orders. We then introduce a model transverse singularity located at the tangency point and derive its algebraic far-field contribution. Finally, by moving the same singularity away from the tangency point, we show that the corresponding tangency-region contribution becomes super-algebraically decaying. This comparison isolates the geometric interaction responsible for the anomalous decay.

\subsection{A two-dimensional setting}
\label{sec:tangency_setting}

We now examine how the interaction between a transverse singularity
of the source and the tangency set affects the far-field decay in the
simplest two-dimensional setting. Let
\[
    d=2,\qquad A=I,\qquad \lambda=k^2,\qquad k>0.
\]
The Fermi surface is then the circle
\[
    \bF=\{\xi\in\R^2:|\xi|=k\}.
\]

We fix the observation direction
\[
    \n=(0,1),
    \qquad
    x=R\n=(0,R),
    \qquad R>0.
\]
The active and tangency parts of the Fermi surface are
\[
    \bF^+
    =
    \{(\xi_1,\xi_2)\in\bF:\xi_2>0\},
\]
and
\[
    \bT_\n
    =
    \{(\xi_1,\xi_2)\in\bF:\xi_2=0\}
    =
    \{(k,0),(-k,0)\}.
\]

Writing
\[
    \t=(1,0),
    \qquad
    \xi=\gamma\t+s\n=(\gamma,s),
\]
we have
\[
    a=1,\qquad
    b(\gamma)=0,\qquad
    c(\gamma)=\gamma^2-k^2,\quad
    D(\gamma)=k^2-\gamma^2.
\]
The characteristic root with positive imaginary part is
\[
    s_+(\gamma,\epsilon)
    =
    \sqrt{k^2-\gamma^2+\i\epsilon}\rightarrow
    s_+(\gamma)
    :=
    s_+(\gamma,0)
    =
    \sqrt{k^2-\gamma^2+\i0},\quad\epsilon\rightarrow 0^+.
\]

For a sufficiently small $\delta>0$, we distinguish the regular,
non-tangential, and tangency parameter regions by
\begin{align*}
\mathcal R
    &=
    (-\infty,-k-\delta)\cup(k+\delta,\infty);\\
    \mathcal N
    &=
    [-k+\delta,k-\delta];\\
     \mathcal T
   & =
    (-k-\delta,-k+\delta)
    \cup
    (k-\delta,k+\delta).
\end{align*}
Here and below, we suppress the dependence on $\delta$ in the
notation.

The tangency-region contribution therefore consists of two local contributions associated with the tangency points \((k,0)\) and \((-k,0)\). We focus on the contribution from \((k,0)\), and choose a cutoff function
$$
\chi_1\in C_0^\infty(\mathbb R),\qquad
\chi_1(r)=1\quad\text{for }|r|\text{ sufficiently small},
$$
in the transverse variable \(\gamma\). This localizes the source to a neighbourhood of one tangency point. The tangency-region contribution will be denoted by $v$ with the definition in terms of \eqref{eq:tangency}.

We will compare sources that are analytic in the transverse variable
near the tangency point with sources having a prescribed transverse
singularity there. In both cases, the source is analytic in the
longitudinal variable $s$ near $(k,0)$.

\subsection{Analytic source decomposition}
\label{sec:ana_source}

We restrict attention to a single patch $\mathcal P$ containing the tangency point $(k,0)$. In the present two-dimensional setting, the transverse variable is $\gamma\in\mathbb R$. We set
$$
    \mathcal P=(k-\delta,k+\delta),
$$
where $\delta>0$ is sufficiently small, and choose $\chi_1\in C_0^\infty(\mathbb R)$ such that
$$
    \operatorname{supp}\chi_1(\gamma-k)\subset\mathcal P
$$
and $\chi_1=1$ in a neighbourhood of $\gamma=k$.

We then write
$$
    \widehat f(\gamma,s)
    =\chi_1(\gamma-k) G(\gamma,s),
$$
where $ G$ is analytic in both variables $(\gamma,s)$ in a neighbourhood of $(k,0)$. Therefore, from \eqref{eq:tangency}, let $v:=u^{\rm tan}$,
$$
v(x)=
\frac{\i}{4\pi}
\int_{k-\delta}^{k+\delta}
\frac{
    e^{\i R\sqrt{k^2-\gamma^2}}\chi_1(\bgamma-k)
    G\left(\gamma,\sqrt{k^2-\gamma^2}\right)
}{
   \sqrt{k^2-\gamma^2}
}
\,\d\gamma.
$$

Splitting the integral at $\gamma=k$ and introducing
\[
s=\sqrt{k^2-\gamma^2}\quad\text{for } \gamma<k,
\qquad
s=\sqrt{\gamma^2-k^2}\quad\text{for } \gamma>k,
\]
we obtain
\begin{equation}
\label{eq:G_int}
v(x)=\frac{\i}{4\pi}v^+(R)+\frac{1}{4\pi}v^-(R),
\end{equation}
where
\begin{equation}
\label{eq:G_int_+}
v^+(R)
=
\int_0^{a_+}
e^{\i Rs}
\frac{
\chi_1\!\left(\sqrt{k^2-s^2}-k\right)
G\!\left(\sqrt{k^2-s^2},s\right)
}{
\sqrt{k^2-s^2}
}
\,\d s,
\end{equation}
and
\begin{equation}
\label{eq:G_int_-}
v^-(R)
=
\int_0^{a_-}
e^{-Rs}
\frac{
\chi_1\!\left(\sqrt{k^2+s^2}-k\right)
G\!\left(\sqrt{k^2+s^2},\i s\right)
}{
\sqrt{k^2+s^2}
}
\,\d s,
\end{equation}
where $a_+=\sqrt{2k\delta-\delta^2}$ and $a_-=\sqrt{2k\delta+\delta^2}$.

We now analyze the far-field behaviour of the two integrals in
\eqref{eq:G_int_+}--\eqref{eq:G_int_-}. The key point is that the endpoint contributions at \(s=0\) cancel term by term at every algebraic order.

Since $\chi(\gamma-k)=1$ in a neighbourhood of $\gamma=k$, the
cutoff does not affect the local behaviour generated by the endpoint
$s=0$. We therefore introduce the local amplitudes
\[
G^+(s)
:=
\frac{G\left(\sqrt{k^2-s^2},s\right)
}{
\sqrt{k^2-s^2}
},\quad
G^-(s)
:=
\frac{
G\left(\sqrt{k^2+s^2},\i s\right)
}{
\sqrt{k^2+s^2}
}.
\]
With a simple computation,
\[
G^-(s)=G^+(\i s).
\]
Both functions are analytic in a neighbourhood of $s=0$. Hence,
\[
G^\pm(s)
=
\sum_{j=0}^\infty a_j^\pm s^j.
\]

\begin{lemma}
\label{lem:tangency_amplitude_coefficients}
With $G^\pm$ defined above, we have
\[
a_j^-=\i^j a_j^+
\qquad
\text{for all }j\geq0.
\]
\end{lemma}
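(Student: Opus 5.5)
The plan is to derive the coefficient relation directly from the identity $G^-(s)=G^+(\i s)$ stated just before the lemma, using uniqueness of power series coefficients. First I will check that identity carefully, since it is the only real input.

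Define $G^+$ on a complex neighbourhood of $s=0$ by the same formula, taking $\sqrt{k^2-s^2}$ to be the principal branch. This branch is holomorphic near $s=0$ because $k^2-s^2$ stays close to $k^2>0$. Since $G$ is analytic in both variables near $(k,0)$ and $\sqrt{k^2-s^2}\to k$, the composition $s\mapsto G(\sqrt{k^2-s^2},s)$ is holomorphic near $0$. The denominator $\sqrt{k^2-s^2}$ does not vanish there, so $G^+$ is holomorphic on some disc $|s|<r$.

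Substituting $s\mapsto \i s$ gives $k^2-(\i s)^2=k^2+s^2$. For real $s$ the principal branch then yields $\sqrt{k^2-(\i s)^2}=\sqrt{k^2+s^2}$, with no branch ambiguity because both values are positive near $s=0$. Hence
\[
G^+(\i s)=\frac{G\bigl(\sqrt{k^2+s^2},\i s\bigr)}{\sqrt{k^2+s^2}}=G^-(s)
\]
for real $s$ near $0$. By the identity theorem, this equality extends to the whole disc $|s|<r$ on which both sides are holomorphic. In the same step I will note that $G^-$ is itself analytic near $0$, so its Taylor series exists.

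Next, I expand $G^+(\i s)=\sum_{j\ge0}a_j^+(\i s)^j=\sum_{j\ge0}\i^j a_j^+ s^j$, which converges for $|s|<r$. Comparing with $G^-(s)=\sum_j a_j^- s^j$, uniqueness of Taylor coefficients (for instance $a_j^-=G^{-(j)}(0)/j!$ together with the chain rule $\frac{\d^j}{\d s^j}G^+(\i s)=\i^j (G^+)^{(j)}(\i s)$) gives $a_j^-=\i^j a_j^+$ for all $j$.

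The main obstacle is the branch bookkeeping in the first step. One has to confirm that evaluating the holomorphic extension of $G^+$ at $\i s$ reproduces exactly the real square root $\sqrt{k^2+s^2}$ and the argument $\i s$ used in \eqref{eq:G_int_-}. This ultimately traces back to $s_+(\gamma)=\sqrt{k^2-\gamma^2+\i0}=\i\sqrt{\gamma^2-k^2}$ for $\gamma>k$.
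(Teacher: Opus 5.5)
Your proposal is correct and is essentially the paper's own proof: you expand $G^+$ in its Taylor series, substitute $\i s$, and compare coefficients with those of $G^-$ using $G^-(s)=G^+(\i s)$. Your extra check of that identity via the principal branch of $\sqrt{k^2-s^2}$, which the paper calls a simple computation, is accurate; in fact the two sides coincide identically on the disc, so you do not even need the identity theorem there.
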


\begin{proof}
Since $G^-(s)=G^+(\i s)$, the Taylor expansion of $G^+$ at $0$ gives
\[
G^-(s)
=G^+(\i s)
=\sum_{j=0}^\infty a_j^+(\i s)^j
=\sum_{j=0}^\infty \i^j a_j^+s^j.
\]
Hence $a_j^-=\i^j a_j^+$ for every $j\geq0$.
\end{proof}

\begin{theorem}[Cancellation of the analytic tangency-region contribution]
\label{thm:analytic_tangency_cancellation}
Let $G$ be analytic in a neighbourhood of $(k,0)$, and let $G^\pm$, $v^\pm$ be defined as above. Then, for every $N\in\mathbb N$,
\[
v(0,R)=\mathcal O(R^{-N})
\qquad
\text{as }R\to\infty.
\]
\end{theorem}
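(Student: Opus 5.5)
We prove the bound by asymptotically expanding the two endpoint integrals $v^\pm(R)$ in \eqref{eq:G_int} and showing that the contributions from the endpoint $s=0$ cancel order by order.

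\textbf{Step 1: localize.} Choose $s_0>0$ so small that $\chi_1(\sqrt{k^2\mp s^2}-k)=1$ for $0\le s\le s_0$ and $G^\pm$ are analytic on $[0,s_0]$. Take a cutoff $\psi\in C^\infty$ with $\psi=1$ near $0$ and $\operatorname{supp}\psi\subset[0,s_0)$, and split each $v^\pm$ into a piece carrying $\psi$ and a piece carrying $1-\psi$.

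\textbf{Step 2: the pieces away from $0$.}
\begin{itemize}
\item In $v^-$, the $(1-\psi)$ piece is $O(e^{-cR})$ because of the factor $e^{-Rs}$.
\item In $v^+$, the $(1-\psi)$ piece has an amplitude that is smooth in $s$ on $(0,a_+]$. The upper endpoint $a_+$ corresponds to $\gamma=k-\delta$, where $\chi_1$ vanishes to infinite order. Indeed, as a function of $s$, $\chi_1(\sqrt{k^2-s^2}-k)$ is smooth near $s=a_+$ and vanishes identically there, since $\operatorname{supp}\chi_1(\cdot-k)\subset\mathcal P$ is compact in the open interval.
\item Repeated integration by parts therefore gives $O(R^{-N})$. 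The $\sqrt{k^2-s^2}$ factor causes no difficulty, since $s<k$ on this range.
\end{itemize}

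\textbf{Step 3: the pieces near $0$.} These are $\int_0^\infty e^{\i Rs}\psi(s)G^+(s)\,\d s$ and $\int_0^\infty e^{-Rs}\psi(s)G^-(s)\,\d s$. Apply Watson's lemma to each, in its oscillatory form for the first, obtained by $N$ integrations by parts using that $\psi G^+$ is smooth with compact support in $[0,\infty)$. This gives
\[
\int_0^\infty e^{\i Rs}\psi G^+\,\d s=\sum_{j<N} a_j^+\,j!\,\Bigl(\frac{\i}{R}\Bigr)^{j+1}+O(R^{-N-1}),
\]
\[
\int_0^\infty e^{-Rs}\psi G^-\,\d s=\sum_{j<N} a_j^-\,\frac{j!}{R^{j+1}}+O(R^{-N-1}).
\]
The first expansion uses $\int_0^\infty s^je^{\i Rs}\,\d s=j!\,(\i/R)^{j+1}$, understood via integration by parts boundary terms.

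\textbf{Step 4: cancellation.} Insert these into \eqref{eq:G_int}. The $j$-th term is
\[
\frac{j!}{4\pi R^{j+1}}\bigl(\i\cdot \i^{j+1}a_j^+ + a_j^-\bigr).
\]
By Lemma~\ref{lem:tangency_amplitude_coefficients}, $a_j^-=\i^j a_j^+$, so the bracket equals
\[
\i^{j+2}a_j^+ + \i^j a_j^+ = \i^j a_j^+\,(\i^2+1)=0 .
\]
Every algebraic term therefore vanishes, and since $N$ is arbitrary, $v(0,R)=O(R^{-N})$.

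\textbf{Main obstacle.} The delicate point is checking the prefactors in \eqref{eq:G_int}: the $\i$ in front of $v^+$ against the $1$ in front of $v^-$. Note that $\d\gamma/\sqrt{k^2-\gamma^2}=-\d s/\gamma$ for $\gamma<k$, while $\sqrt{k^2-\gamma^2}=\i s$ with $\d\gamma=s\,\d s/\gamma$ for $\gamma>k$. So the relative factor is $\i^{-1}$ against the orientation sign, and the cancellation above depends on exactly this constant. If my recomputation gives a different relative constant, I will redo the Watson coefficients with the actual prefactors. The structural reason for cancellation is unchanged: both pieces are the restriction of a single integral of $e^{\i Rz}\psi G^+(z)\,\d z$ along the rays $\mathbb R_+$ and $\i\mathbb R_+$. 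The endpoint expansions at $z=0$ are then identical with opposite orientation, so they cancel. This equivalently amounts to rotating the contour through the first quadrant, which can serve as a backup argument.
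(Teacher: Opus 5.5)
Your proposal is correct and follows essentially the same route as the paper: you discard the parts away from $s=0$ (exponentially small for $v^-$, non-stationary phase for $v^+$), expand both endpoint integrals in the Taylor coefficients $a_j^\pm$, and cancel term by term using $a_j^-=\i^j a_j^+$ together with the prefactors $\i/(4\pi)$ and $1/(4\pi)$, whose relative factor $\i^{-1}$ you verified correctly. Your smooth cutoff $\psi$ is a small improvement over the paper's sharp split at $\delta_0$: it avoids the $O(R^{-1})$ boundary terms at $\delta_0$ that the sharp split creates in each piece, which cancel only when the inner and outer pieces are recombined — a step the paper does not spell out.
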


\begin{proof}
Since $\chi_1=1$ in a neighbourhood of 0, the cutoff
does not affect the endpoint expansions. 
Fix $N\in\mathbb N$. Choose $\delta_0>0$ sufficiently small such that
\[
0<\delta_0<a_+<a_-,\quad \chi_1(r)=1\text{ for }|r|\leq\delta_0.
\]
We split the integrals
in \eqref{eq:G_int_+} and \eqref{eq:G_int_-} at $\delta_0$.

The point $\delta_0$ is an artificial splitting point. The part of $v_-$ away from $t=0$ is exponentially small because
of the factor $e^{-Rt}$, while the corresponding part of $v_+$ is
$O(R^{-N})$ for every $N$ by repeated integration by parts, since the
cutoff removes the outer endpoint contribution.  Thus only the  endpoint \(0\) needs to be considered. On $[0,\delta_0]$, Taylor expansion gives
\[
G_+(s)
=
\sum_{j=0}^{N-1}a_j^+s^j+\mathcal O(s^N),
\qquad
G_-(t)
=
\sum_{j=0}^{N-1}a_j^-t^j+\mathcal O(t^N).
\]
The remainder in  $v^+$ is
$\mathcal O(R^{-N-1})$ by repeated integration by parts, while that in $v^-$ has same order after the scaling
$t=R^{-1}y$. Thus it suffices to consider the polynomial terms.

For $j\geq0$, repeated integration by parts gives
\[
\int_0^{\delta_0}e^{\i Rs}s^j\,\d s
=
e^{\i R\delta_0}
\sum_{\ell=0}^{j}
\frac{(-1)^\ell j!}
{(j-\ell)!(\i R)^{\ell+1}}
\delta_0^{j-\ell}
-
\frac{(-1)^j j!}{(\i R)^{j+1}}.
\]
Hence the contribution from the endpoint $s=0$ to the $j$-th term of
$v^+$ is
\[
-\frac{(-1)^j j!}{(\i R)^{j+1}}a_j^+
=
\frac{\i^{j+1}j!}{R^{j+1}}a_j^+.
\]

Similarly,
\[
\int_0^{\delta_0}e^{-Rt}t^j\,\d t
=
e^{-R\delta_0}
\sum_{\ell=0}^{j}
\frac{(-1)^\ell j!}
{(j-\ell)!(-R)^{\ell+1}}
\delta_0^{j-\ell}
-
\frac{(-1)^j j!}{(-R)^{j+1}},
\]
so the contribution from the endpoint $t=0$ to the $j$-th term of
$v^-$ is
\[
\frac{a_j^-j!}{R^{j+1}}.
\]

Using $a_j^-=\i^j a_j^+$ and the prefactors in \eqref{eq:G_int}, the
corresponding contribution to $v(x)$ is
\[
\frac{\i}{4\pi}
\frac{\i^{j+1}j!}{R^{j+1}}a_j^+
+
\frac{1}{4\pi}
\left(
\frac{\i^j j!}{R^{j+1}}a_j^+
\right)
=0.
\]
Thus the endpoint contributions from the tangency point cancel
term by term at every algebraic order. 
Since $N$ is arbitrary, and all remaining contributions are
$\mathcal O(R^{-N})$, we conclude that
\[
v(x)=\mathcal O(R^{-N}),\quad\forall \, N\in\mathbb{N}
\]
\end{proof}

Thus, an analytic source does not produce an algebraically decaying tangency-region contribution. In particular, the tangency-region contribution is super-algebraically small in this case.

\subsection{Non-analytic source decomposition}
\label{sec:non-ana-source}

The preceding result shows that an analytic source produces no algebraic
contribution from the tangency point. We now investigate how this
cancellation is affected by a loss of analyticity in the transverse
variable.

We again work in a neighbourhood of the tangency point and introduce
the two patches
$$
\mathcal P_1=(k-\delta,k),
\qquad
\mathcal P_2=(k,k+\delta),
$$
corresponding to the two sides of the tangency point. To isolate the
effect of the transverse non-analyticity, we consider a source of the
form
$$
\widehat f(\gamma,s)
=\chi_1(\gamma-k) H_\alpha(\gamma,s),
$$
with $\chi_1$ as above and 
$$
H_\alpha(\gamma,s)
=
(\gamma-k)_+^\alpha \chi_2(s),
\qquad
\alpha>-\frac12.
$$
Here $\chi_2\in C_0^\infty(\mathbb R)$ satisfying
$\chi_2=1$ in a neighbourhood of $0$, and
$$
(\gamma-k)_+^\alpha
:=
\begin{cases}
0,&\gamma\leq k,\\
(\gamma-k)^\alpha,&\gamma>k.
\end{cases}
$$
Thus, $\hat{f}(\gamma,s)$ vanishes on $\mathcal P_1$ and equals
$\chi_1(\gamma-k)(\gamma-k)^\alpha$ on $\mathcal P_2$ near the tangency point, with the non-analyticity located at \(\gamma=k\).

\begin{remark}
The source above satisfies Assumption~\ref{asp2}.
Indeed, since $\chi_2=1$ in a neighbourhood of $s=0$, the source is
independent of the longitudinal variable $s$ near the tangency point.
Moreover, since 
\[
D(\gamma)=k^2-\gamma^2,
\]
we have, as $\gamma\to k^+$,
\[
\frac{|H_\alpha(\gamma,s)|}{\sqrt{|D(\gamma)|}}
\asymp
(\gamma-k)^{\alpha-\frac12}.
\]
Finally, since \(\alpha>-\frac12\), the transverse factor \(H_\alpha\) is locally integrable at \(\gamma=k\); together with the compact support of \(\chi_1\) and \(\chi_2\), this yields $\hat{f}\in L^1(S)$.
\end{remark}

Again from \eqref{eq:tangency}, 
\begin{equation}
\label{eq:H_int}
    v(x)
    =
    \frac{\i}{4\pi}v^+(x)
    +
    \frac{1}{4\pi}v^-(x).
\end{equation}
The corresponding expressions for $v^\pm$ are obtained by replacing
$G$ with $H$:
\begin{equation}
\label{eq:H_int_+}
v^+(x)
=
\int_0^{a_+}
e^{\i Rs}
\frac{
    \chi_1(\sqrt{k^2-s^2}-k)
    H_\alpha\left(\sqrt{k^2-s^2},s\right)
}{
    \sqrt{k^2-s^2}
}
\,\d s,
\end{equation}
and
\begin{equation}
\label{eq:H_int_-}
v^-(x)
=
\int_0^{a_-}
e^{-Rs}
\frac{
    \chi_1(\sqrt{k^2+s^2}-k)
    H_\alpha\left(\sqrt{k^2+s^2},s\right)
}{
    \sqrt{k^2+s^2}
}
\,\d s.
\end{equation}

\begin{theorem}[Tangency contribution of a transverse singularity]
\label{thm:nonanalytic_tangency}
Let $H_\alpha$, $v^+$ and $v^-$ be defined as above.  Then
\[
v^+(x)=0,
\]
while
\[
v^-(x)
=
\int_0^{a_-}
e^{-Rt}
\frac{
\chi_1(\sqrt{k^2+t^2}-k)
(\sqrt{k^2+t^2}-k)^\alpha
}{
\sqrt{k^2+t^2}
}
\,\d t.
\]
Moreover, as $R\to\infty$,
\[
v^-(x)
=
\frac{\Gamma(2\alpha+1)}
{k(2k)^\alpha}
R^{-(2\alpha+1)}
+
\mathcal O\bigl(R^{-(2\alpha+3)}\bigr).
\]
In particular, the leading coefficient is nonzero, and hence
\[
v^-(x)\not=o\bigl(R^{-(2\alpha+1)}\bigr).
\]
\end{theorem}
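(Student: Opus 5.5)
The plan has three steps: show $v^+$ vanishes by support, reduce $v^-$ to a Laplace integral near $t=0$, and apply Watson's lemma.

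\emph{Step 1: $v^+(x)=0$.} In \eqref{eq:H_int_+} the transverse argument is $\gamma=\sqrt{k^2-s^2}\le k$ for every $s\in[0,a_+]$. Since $(\gamma-k)_+^\alpha=0$ for $\gamma\le k$, we have $H_\alpha(\sqrt{k^2-s^2},s)=0$ there. The integrand vanishes identically, so $v^+(x)=0$; equivalently, $\widehat f$ is zero on $\mathcal P_1$.

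\emph{Step 2: the formula for $v^-$.} On $\mathcal P_2$ the argument is $\gamma=\sqrt{k^2+t^2}>k$ for $t>0$, so the factor $(\gamma-k)_+^\alpha$ becomes $(\sqrt{k^2+t^2}-k)^\alpha$. For the longitudinal factor, recall that in \eqref{eq:G_int_-} the longitudinal argument is $\i t$, the root $s_+(\gamma)$ being imaginary for $\gamma>k$. We use that $\chi_2$ equals $1$ near $0$ and may take $a_-$ (i.e.\ $\delta$) small enough that $\chi_2\equiv1$ on the relevant range. This also covers the holomorphic continuation of $\chi_2$ near $0$. Hence the $\chi_2$ factor drops out and the stated formula for $v^-$ follows.

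\emph{Step 3: asymptotics.} Fix $\delta_0\in(0,a_-)$ with $\chi_1(\sqrt{k^2+t^2}-k)=1$ for $t\le\delta_0$. The portion of the integral over $[\delta_0,a_-]$ is $\mathcal O(e^{-R\delta_0})$, since the amplitude is bounded there. Near $0$ we write
\[
\sqrt{k^2+t^2}-k=\frac{t^2}{\sqrt{k^2+t^2}+k}=\frac{t^2}{2k}\bigl(1+\mathcal O(t^2)\bigr),
\]
which is analytic and even in $t$. It follows that
\[
\phi(t):=\frac{(\sqrt{k^2+t^2}-k)^\alpha}{\sqrt{k^2+t^2}}=\frac{t^{2\alpha}}{k(2k)^\alpha}\bigl(1+c_1t^2+\mathcal O(t^4)\bigr),
\]
with the bracket analytic and even. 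Applying Watson's lemma termwise, via $\int_0^\infty e^{-Rt}t^{\beta}\,\d t=\Gamma(\beta+1)R^{-\beta-1}$, gives the leading term
\[
\frac{\Gamma(2\alpha+1)}{k(2k)^\alpha}R^{-(2\alpha+1)}.
\]
The next term is of order $t^{2\alpha+2}$ and contributes $\mathcal O(R^{-(2\alpha+3)})$.

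To keep this self-contained I would bound the remainder directly. We have $|\phi(t)-\frac{t^{2\alpha}}{k(2k)^\alpha}|\le Ct^{2\alpha+2}$ on $[0,\delta_0]$, and integrating against $e^{-Rt}$ gives at most $C\Gamma(2\alpha+3)R^{-(2\alpha+3)}$. Completing $\int_0^{\delta_0}$ to $\int_0^\infty$ for the leading monomial costs only an exponentially small error.

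The condition $\alpha>-\tfrac12$ ensures $2\alpha>-1$, so the integral converges at $t=0$ and $\Gamma(2\alpha+1)$ is finite and positive. The coefficient is therefore nonzero, and the non-$o$ statement follows. The main point requiring care is the treatment of the $\chi_2$ factor in Step 2, namely checking that $\chi_2$ is identically $1$ along the relevant (imaginary) longitudinal root. The remaining steps are a routine Watson lemma argument.
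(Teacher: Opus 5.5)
Your proof is correct and follows the paper's argument: the support observation $\sqrt{k^2-s^2}\le k$ kills $v^+$, and the expansion $\sqrt{k^2+t^2}-k=t^2/(2k)+\mathcal O(t^4)$ combined with the Gamma integral and an exponentially small tail gives the asymptotics. The additional details you supply do not change the route; they make explicit what the paper leaves implicit, namely the split at $\delta_0$ where $\chi_1\equiv 1$, the remainder bound $C\,\Gamma(2\alpha+3)R^{-(2\alpha+3)}$, and the observation that $\chi_2$ continues holomorphically as the constant $1$ along the imaginary root.
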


\begin{proof}
The identity $v^+(x)=0$ follows immediately from $\sqrt{k^2-s^2}\leq k$.

For $v^-$, as $t\to0$,
\[
\sqrt{k^2+t^2}-k
=\frac{t^2}{\sqrt{k^2+t^2}+k}=
\frac{t^2}{2k}+\mathcal O(t^4).
\]
Since $\chi_1=1$ near $0$, the integrand therefore has the form
\[
e^{-Rt}
\left(
\frac{1}{k(2k)^\alpha}t^{2\alpha}
+\mathcal O(t^{2\alpha+2})
\right).
\]
Therefore,
\[
v^-(x)
=
\frac{1}{k(2k)^\alpha}
\int_0^{a_-}
e^{-Rt}t^{2\alpha}\,\d t
+
\mathcal O\left(
\int_0^{a_-}
e^{-Rt}t^{2\alpha+2}\,\d t
\right).
\]
From the definition of the Gamma function,
\[
\int_0^\infty e^{-Rt}t^{2\alpha}\,\d t
=
\frac{\Gamma(2\alpha+1)}{R^{2\alpha+1}}.
\]
The replacement of the finite upper limit by $\infty$ produces an
exponentially small error. Hence
\[
v^-(x)
=
\frac{\Gamma(2\alpha+1)}
{k(2k)^\alpha}
R^{-(2\alpha+1)}
+
\mathcal O\bigl(R^{-(2\alpha+3)}\bigr).
\]
This proves the result.

\end{proof}

The range $-\frac12<\alpha<-\frac14$ yields a tangency-region contribution
that decays more slowly than the standard $R^{-1/2}$ far-field term.
We illustrate this anomalous decay with the choice
$
\alpha=-\frac13.
$

\begin{example}
Let $\alpha=-\frac13$. Then
\[
H_{-1/3}\left(\sqrt{k^2+t^2}\right)
=
(2k)^{1/3}t^{-2/3}\bigl(1+O(t^2)\bigr),
\qquad t\to0^+.
\]
Hence Theorem~\ref{thm:nonanalytic_tangency} gives
\[
v^-(R)
=
2^{1/3}k^{-2/3}\Gamma\!\left(\frac13\right)
R^{-1/3}
+O(R^{-7/3}).
\]
Consequently,
\[
v(R)
=
\frac{1}{4\pi}
2^{1/3}k^{-2/3}\Gamma\!\left(\frac13\right)
R^{-1/3}
+O(R^{-7/3}).
\]

Thus the tangency-region contribution decays like $R^{-1/3}$, which is slower
than the usual $R^{-1/2}$ decay of the non-tangential contribution in two
dimensions. In particular, the tangency-region contribution dominates the
standard far-field term.
More generally, the leading order
\[
R^{-(2\alpha+1)}
\]
is slower than $R^{-1/2}$ when
$
-\frac12<\alpha<-\frac14.
$

The corresponding source in physical space is also a genuine function. Indeed, since the Fourier transform is separable, we have
$$
f_{-1/3}(x_1,x_2)=h_{-1/3}(x_1)q(x_2),
$$
where
$$
h_{-1/3}(x_1)
=\frac{e^{ikx_1}}{2\pi}
\int_0^\infty e^{ix_1r}\chi_1(r)r^{-1/3}\,dr,
\qquad
q(x_2)=\frac1{2\pi}\int_{\mathbb R}e^{ix_2s}\chi_2(s)\,ds.
$$
In particular, \(q\in\mathcal S(\mathbb R)\), while the one-sided singularity of the Fourier transform at \(r=0\) gives
$$
h_{-1/3}(x_1)=O(|x_1|^{-2/3})
\qquad\text{as }|x_1|\to\infty.
$$
Consequently,
$$
f_{-1/3}\in L^p(\mathbb R^2)
\qquad\text{for every }p>\frac32,
$$
in particular \(f_{-1/3}\in L^2(\mathbb R^2)\). Thus the anomalous decay exhibited above is produced by a genuine, separable \(L^2\) source, rather than by a distributional or otherwise singular source in physical space.

\end{example}

\subsection{Alignment with the tangency set}

The preceding results indicate that the anomalous tangency-region contribution is tied not merely to transverse non-analyticity, but to its location relative to the tangency set. We now make this dependence explicit.
To this end, consider
\begin{equation*}
\hat{f}_{\alpha,b}(\gamma,s)
:=
\chi_1(\gamma-b)(\gamma-b)_+^\alpha
\chi_2(s),
\qquad
\alpha>-\frac12,
\end{equation*}
and replace $\hat{f}$ in Section \ref{sec:non-ana-source} by $\hat{f}_{\alpha,b}$. Thus, the order of the transverse singularity is unchanged, while its location is shifted from $\gamma=k$ to $\gamma=b$.

Suppose first that
\begin{equation*}
0<b<k.
\end{equation*}
Then $\hat{f}_{\alpha,b}$ is analytic in a neighborhood of the tangency point $(k,0)$. Consequently, from Section \ref{sec:ana_source} the analytic cancellation established above applies to the tangency-region contribution generated by $\hat{f}_{\alpha,b}$, denoted by $v$. In particular,
\begin{equation*}
v
=O(R^{-N})
\qquad\text{for every }N\in\mathbb N.
\end{equation*}
The singularity at $\gamma=b$ may generate a separate non-tangential contribution, but this contribution is not part of the tangency asymptotics considered here.

In contrast, when
\begin{equation*}
b=k,
\end{equation*}
the transverse singularity is located precisely at the tangency point. The analytic cancellation therefore no longer applies. By Theorem~\ref{thm:nonanalytic_tangency},
\begin{equation*}
v
=
C_k R^{-(2\alpha+1)}
+\mathcal{O}\bigl(R^{-(2\alpha+3)}\bigr).
\end{equation*}

Thus, moving the same transverse singularity away from the tangency set changes the tangency-region contribution from an algebraic term to a super-algebraically decaying one. This shows that the anomalously slow decay is not caused by the transverse non-analyticity alone. Rather, it results from the geometric alignment of the transverse singularity with the tangency set.

\section{Realization of the tangency mechanism by a non-trivial potential}
\label{sec:nonzero_potential}

We now show that the tangency-region contribution described above is not
restricted to the free case $V=0$. We construct a potential for which the
first Born term exhibits the tangency contribution described in Section 3.

We use the first Born approximation only as a realization device. The
purpose of this section is not to derive the far-field asymptotics of the
full perturbed resolvent, but to show that the tangency mechanism identified
in the free problem can be generated by a bounded, decaying potential at the
first perturbative level.

We  use the same model as in Section \ref{sec:tan_V_0} and take the point source
$
f=\delta_0
$
and denote by
\[
u_0:=R_0(k^2)\delta_0=\frac{\i}{4}H_0^{(1)}(k|x|)
\]
the corresponding outgoing free-space Green's function. Since $u_0$ is
complex-valued, we first construct a complex-valued potential and then
extract a real-valued one.

At the level of the first-order Born approximation, we have formally
\[
R_V=R_0(I+VR_0)^{-1}
= R_0-R_0VR_0+\cdots.
\]
Therefore,
\[
R_Vf
\approx
R_0f-R_0VR_0f
=
u_0-R_0g,
\]
where
\[
g(x)=V(x)u_0(x).
\]
We now construct a potential for which the first Born correction
$-R_0 V R_0 f$ exhibits a
dominant tangency-region contribution. To this end, we choose a source $g$
satisfying the source assumptions of Section~\ref{sec:tan_V_0}, with the
same transverse non-analyticity as in Section~\ref{sec:non-ana-source}.
Specifically, let
\[
    \widehat g(\xi_1,\xi_2)
    =
    \chi_1(\xi_1-k)(\xi_1-k)_+^\alpha
    \chi_2(\xi_2),
    \qquad
    -\frac12<\alpha<0,
\]
where $\chi_1,\chi_2\in C_0^\infty(\mathbb R)$ are equal to one in
neighbourhoods of $0$. Thus, the tangency-region contribution of $R_0g$ is
exactly of the form analyzed in Theorem~\ref{thm:nonanalytic_tangency}.
We then define
\[
    V_c(x):=\frac{g(x)}{u_0(x)}
    \qquad\Longleftrightarrow\qquad
    V_cu_0=g.
\]
Note that since $H_0^{(1)}(r)$ has no zeros on $\R$, $V_c$ is well defined for all $x\in\R^2\setminus\{0\}$. When $|x|\rightarrow 0$, since $g$ is bounded and $u_0(x)\sim \log|x|$, we simply set $V_c(0)=0$. Therefore, $V_c$ is well defined for all $x\in\R^2$.

\begin{lemma}
\label{lem:potential}
For $-1/2<\alpha<0$, the potential $V_c$ is bounded; moreover, $|V_c(x)|=\mathcal{O}\left(|x|^{-\alpha-\frac{1}{2}}\right)$.
\end{lemma}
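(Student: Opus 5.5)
We have $V_c = g/u_0$ with $u_0(x)=\frac{\i}{4}H_0^{(1)}(k|x|)$, so the plan is to bound $|g|$ from above and $|u_0|$ from below in each of three regimes: small $|x|$, bounded $|x|$, and large $|x|$.

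\emph{Step 1 (the physical-space source).} Since $\widehat g$ is separable, we write $g(x_1,x_2)=h_\alpha(x_1)q(x_2)$, exactly as in the Example. Here
\[
h_\alpha(x_1)=\frac{e^{\i kx_1}}{2\pi}\int_0^\infty e^{\i x_1 r}\chi_1(r)r^\alpha\,\d r,
\qquad q\in\mathcal S(\mathbb R).
\]
Because $\alpha>-1$, $r^\alpha\chi_1$ is integrable, so $h_\alpha$ is bounded. For large $|x_1|$, the standard asymptotics of the Fourier transform of a one-sided homogeneous singularity (deform $r\mapsto \i r/x_1$, or integrate by parts after subtracting $r^\alpha e^{-r}$) give $h_\alpha(x_1)=C_\alpha x_1^{-\alpha-1}(1+o(1))$. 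Hence
\[
|h_\alpha(x_1)|\lesssim (1+|x_1|)^{-\alpha-1}.
\]
Since $q$ is Schwartz, $|q(x_2)|\lesssim (1+|x_2|)^{-M}$ for every $M$.

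\emph{Step 2 (lower bounds for $u_0$).} First, $H_0^{(1)}(r)=J_0(r)+\i Y_0(r)$ has no real zeros, because $J_0$ and $Y_0$ have no common zeros (their Wronskian is $2/(\pi r)\neq0$). So $|u_0|\ge c_K>0$ on every compact annulus $\{r_0\le|x|\le K\}$. Second, as $|x|\to0$, $|u_0(x)|\sim \frac{1}{2\pi}|\log|x||\to\infty$, so $|V_c|\le C/|\log|x||\to0$; this is consistent with setting $V_c(0)=0$ and gives boundedness near the origin. Third, as $r\to\infty$, $|H_0^{(1)}(r)|=\sqrt{2/(\pi r)}\,(1+O(r^{-1}))$, so $|u_0(x)|\ge c|x|^{-1/2}$ for $|x|\ge K$.

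\emph{Step 3 (decay).} For $|x|\ge K$, combining the two steps gives
\[
|V_c(x)|\lesssim |x|^{1/2}(1+|x_1|)^{-\alpha-1}(1+|x_2|)^{-M}.
\]
We split according to which coordinate dominates.
\begin{itemize}
\item If $|x_2|\ge|x|/2$, the Schwartz factor makes the bound $O(|x|^{-N})$ for every $N$.
\item If $|x_1|\ge|x|/2$, the bound is $\lesssim |x|^{1/2}|x|^{-\alpha-1}=|x|^{-\alpha-1/2}$.
\end{itemize}
Since $\alpha>-1/2$, the exponent $-\alpha-1/2$ is negative, so $|V_c(x)|=\mathcal O(|x|^{-\alpha-1/2})$ and $V_c$ is bounded at infinity. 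Together with Step 2, $V_c$ is bounded on all of $\mathbb R^2$. The hypothesis $\alpha<0$ is not needed for this bound; I expect it is used later to place $V_c$ in the long-range regime.

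\emph{Main obstacle.} The only non-routine ingredient is the sharp decay $h_\alpha(x_1)=O(|x_1|^{-\alpha-1})$ in Step 1; everything else is standard Bessel asymptotics. To prove it, I would write
\[
\chi_1(r)r^\alpha = r^\alpha e^{-r}+\bigl(\chi_1(r)-e^{-r}\bigr)r^\alpha .
\]
The first piece has Fourier transform $\Gamma(\alpha+1)(1-\i x_1)^{-\alpha-1}$, which decays exactly at the rate $|x_1|^{-\alpha-1}$. The second piece equals $O(r^{\alpha+1})$ near $0$, is smooth away from $0$, and is compactly supported up to a Schwartz tail. One integration by parts therefore shows its transform is $O(|x_1|^{-\alpha-2})$, which is faster than the first piece, so it does not affect the rate.
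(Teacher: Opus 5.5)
Your proof is correct and follows the paper's route. Both arguments use the separable factorization $g=h_\alpha(x_1)q(x_2)$, the endpoint bound $|h_\alpha(x_1)|\lesssim(1+|x_1|)^{-\alpha-1}$ together with the Schwartz decay of $q$, and $|u_0(x)|\asymp|x|^{-1/2}$ at infinity with $|u_0(x)|\sim|\log|x||$ at the origin; your compact-annulus lower bound (via the Wronskian of $J_0,Y_0$) and your derivation of the endpoint bound fill in what the paper only asserts or cites.

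One small correction concerns the remainder $\phi(r):=(\chi_1(r)-e^{-r})r^\alpha$. A single integration by parts, with the resulting integral $\int_0^\infty e^{ix_1r}\phi'(r)\,dr$ bounded by $\|\phi'\|_{L^1}$, yields only $O(|x_1|^{-1})$, not $O(|x_1|^{-\alpha-2})$. The stronger rate would require the endpoint estimate again for the $r^{\alpha}$-singularity of $\phi'$. However, $O(|x_1|^{-1})$ already suffices because $\alpha<0$. So your argument as written does use that hypothesis, even though the endpoint bound on $h_\alpha$ itself holds for every $\alpha>-1$.
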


\begin{proof}
Since $\widehat g\in L^1(\mathbb R^2)$, the function $g$ is bounded.
Moreover, the separable form of $\widehat g$ gives
\[
    g(x_1,x_2)=h_\alpha(x_1)q(x_2),
\]
where
\[
    h_\alpha(x_1)
    =
    \frac{e^{\i kx_1}}{2\pi}
    \int_0^\infty e^{\i x_1r}\chi_1(r)r^\alpha\,\d r,
    \qquad
    q(x_2)
    =
    \frac{1}{2\pi}\int_{\mathbb R}
    e^{\i x_2s}\chi_2(s)\,\d s.
\]
Here $q\in\mathcal S(\mathbb R)$. The standard endpoint asymptotics for
the one-sided Fourier transform give
\[
    h_\alpha(x_1)
    \lesssim(1+|x_1|)^{-\alpha-1},
    \qquad |x_1|\to\infty.
\] 
Therefore, we have, for every $M>\alpha+1$,
$$
 |g(x_1,x_2)|
 \lesssim
(1+|x_1|)^{-\alpha-1}(1+|x_2|)^{-M}\lesssim (1+|x|)^{-\alpha-1}.
$$
On the other hand, the standard far-field asymptotics of the outgoing
free resolvent give
$$
 |u_0(x)|\asymp |x|^{-1/2},
 \qquad |x|\to\infty.
$$
Thus
$$
 |V_c(x)|=O(|x|^{-\alpha-1/2}),
 \qquad |x|\to\infty.
$$
Near the origin, $g$ is bounded while
$|u_0(x)|\sim |\log|x||$, and therefore $V_c(x)\to0$ as $x\to0$.
In particular, $V_c$ is bounded on $\mathbb R^2$.

\end{proof}

\begin{remark}[Relation between potential decay and tangency decay]
The parameter \(\alpha\) controls both the spatial decay of the constructed potential and the strength of the tangency contribution. More precisely,

$$ |V_c(x)|=O\bigl(|x|^{-\alpha-\frac12}\bigr), \qquad |x|\to\infty, $$
while Theorem 3.4 gives a tangency contribution of order $ R^{-(2\alpha+1)}. $
Thus, increasing \(\alpha\) makes the potential decay faster, but simultaneously makes the tangency contribution decay faster.

It is important to distinguish the tangency mechanism itself from its dominance over the conventional far field. For non-integer \(\alpha\), the transverse factor \((\gamma-k)_+^\alpha\) remains non-analytic at the tangency point, and hence the cancellation mechanism of Theorem 3.2 does not apply. The tangency contribution therefore persists beyond the long-range regime. However, it dominates the standard two-dimensional \(R^{-1/2}\) far-field contribution only when

$$ 2\alpha+1<\frac12, \qquad\text{i.e.}\qquad -\frac12<\alpha<-\frac14. $$

In this regime the constructed potential is long-range. By contrast, for larger \(\alpha\), the potential decays faster while the tangency contribution becomes lower order relative to the conventional far field.

Hence, within this construction, the long-range character of the potential is not essential for the existence of the tangency mechanism itself; rather, it is associated with the possibility that the tangency contribution becomes anomalously slow and dominant.
\end{remark}

\bigskip

We finally pass from the complex-valued potential $V_c$ to a
real-valued one. Write
$$V_c=V_1+\i V_2,
    \qquad
    V_1,V_2\in\mathbb R.
$$
Since $V_c\not\equiv0$, at least one of $V_1$ and $V_2$ is non-trivial.
Moreover,
$$
    V_cu_0
    =
    V_1u_0+\i V_2u_0
    =
    g.
$$
Let $\mathcal C(V)$ denote the coefficient of the leading
$R^{-(2\alpha+1)}$ tangency-region contribution generated by $Vu_0$.
By linearity,
$$
    \mathcal C(V_c)
    =
    \mathcal C(V_1)+\i\mathcal C(V_2).
$$
The construction of $g$ gives
$$
    \mathcal C(V_c)\neq0\quad\Rightarrow\quad|\mathcal C(V_1)|+|\mathcal C(V_2)|\neq 0.
$$
Consequently, at least one of the two real-valued potentials
$V_1$ and $V_2$ produces a non-vanishing tangency-region contribution of order
$R^{-(2\alpha+1)}$ in the first Born term.

In particular, choosing
$$
    \alpha=-\frac13\quad\Rightarrow\quad
    2\alpha+1=\frac13.
$$
Hence at least one real-valued, bounded and decaying potential
produces a non-vanishing tangency-region contribution
$$
    C R^{-1/3},
    \qquad C\neq0,
$$
in the first Born term. Since the standard non-tangential contribution
in dimension two decays like
$
    R^{-1/2},
$
we have $    R^{-1/3}\gg R^{-1/2}$ as $R\to\infty.$

Thus the tangency-region contribution can dominate the conventional
non-tangential far field already at the level of the first Born
approximation.

\begin{remark}
The construction above shows that the tangency mechanism identified in
the free case is not an artefact of the absence of a potential. It can
already be realized by a bounded, decaying real-valued perturbation at
the level of the first Born approximation.

The distinction between mechanism and dominance is relevant here. In
the present family, the long-range character of the potential
corresponds to the parameter regime in which the tangency contribution
can dominate the conventional far field. The relevant quantity is the
effective source $Vu_0$: when its transverse Fourier transform has a
non-analyticity aligned with the tangency set, the resulting tangency
contribution can decay more slowly than the conventional non-tangential
far field. In particular, for $\alpha=-\frac13$, the tangency
contribution is of order $R^{-1/3}$.
\end{remark}

\section*{Declaration of interest}
The author declares that there are no conflicts of interest.

\bibliographystyle{plain}
\bibliography{aa-biblio}

\end{document}